\def\E{{\rm E}\,}
\crefname{hypothesis}{Hypothesis}{Hypotheses}
\title{Global Activity Scores\thanks{Submitted to the editors DATE.
}}
\author{Ruilong Yue\thanks{Department of Mathematics, Florida State University, Tallahassee, FL 
  (\email{ryue@math.fsu.edu}).}
\and Giray \"{O}kten\thanks{Corresponding author. Department of Mathematics, Florida State University, Tallahassee, FL 
  (\email{okten@math.fsu.edu}).}}
\begin{document}

\maketitle


\begin{abstract}
We introduce a new global sensitivity measure, the \textit{global activity scores}. 
The measure is based on finite differences of the underlying function, in contrast to several sensitivity measures in the literature that are based on derivatives of the function. We establish its theoretical connection with Sobol' sensitivity indices and demonstrate its performance through numerical examples. In these examples, we compare global activity scores with Sobol' sensitivity indices, derivative-based sensitivity measures, and activity scores. 
The results show that in the presence of additive noise or high variability, global activity scores provide more stable and reliable identification of influential variables than derivative-based measures and activity scores, which are more sensitive to noise. In noiseless settings, however, all three approaches yield comparable results.
\end{abstract}

\begin{keywords}
Global sensitivity analysis, Sobol' sensitivity indices, derivative-based global sensitivity measures, activity scores
\end{keywords}

\begin{AMS}
65C05, 65C20
\end{AMS}

\section{Introduction}
Global sensitivity analysis seeks to quantify how uncertainty in a model’s output can be 
attributed to uncertainties in its inputs. Such analyses are indispensable in modern 
computational modeling, where complex models frequently involve many parameters whose 
relative influence is not known a priori. Sensitivity information enables the modeler to 
identify the most influential inputs, reduce model complexity by fixing nonessential 
variables, guide data collection and experimental design, and improve the interpretability 
and robustness of the model. Applications of global sensitivity analysis span the natural 
sciences, engineering, social sciences, and mathematical sciences (Saltelli et al. \cite{saltelli2008global}).

Several approaches to global sensitivity analysis have been proposed in the literature. Variance-based Sobol' 
sensitivity indices provide a decomposition of output variance into contributions 
from individual inputs and their interactions (Saltelli et al. \cite{saltelli2010variance}, Sobol' \cite{sobol2001global}). Derivative-based global sensitivity measures assess the average influence of inputs using 
derivative information averaged over the parameter space (Kucherenko and Iooss \cite{kucherenko2014derivative}, Sobol' and Kucherenko \cite{sobol2009derivative}, Sobol' and Kucherenko \cite{sobol2010derivative}).
More recently, activity scores have been introduced as sensitivity measures based on the dominant directions of variation 
in a model (Constantine and Diaz \cite{constantine2017global}). These dominant directions are obtained from the active subspace method, which identifies low-dimensional structure in high-dimensional models by examining how the 
function’s gradients vary on average. The active subspace framework has been applied widely 
and provides an effective way to reduce dimensionality when accurate gradient information 
is available (Constantine \cite{constantine2015active}, Constantine et al. \cite{constantine2014active}).

The active subspace method and the resulting activity scores, however, have an important limitation: they depend heavily on accurate gradient 
information, which may be unavailable for models that are noisy, 
or non-smooth. These limitations create a need for sensitivity analysis 
tools that retain the interpretability and structural insights of active subspace while avoiding its 
dependence on derivatives.

In this paper, we introduce a new sensitivity measure, the \textit{global activity scores}, 
which eliminates the dependence on gradients. Global activity scores are based on 
identifying the important directions along which the function changes the most, where 
``change'' is quantified not through derivatives but through first-order finite differences 
of function values. Because finite differences capture variation over nonlocal regions of 
the input space, global activity scores provide a more \textit{global} assessment of sensitivity.
We investigate the theoretical relationship between global activity scores and Sobol' 
indices, and we present numerical results demonstrating the advantages of the method, 
particularly in the presence of noise. Our numerical results show that global activity scores are 
more robust than activity scores and derivative-based global sensitivity measures when the 
underlying function is noisy. This robustness is particularly desirable in applications where model evaluations are affected by noise or numerical variability, as it allows the method to identify variables that influence the underlying signal despite the presence of noise.

The remainder of the paper is organized as follows. Section \ref{sec:review} reviews Sobol' sensitivity indices (Sobol' \cite{sobol2001global}), derivative-based global sensitivity measures (Sobol' and Kucherenko \cite{sobol2009derivative}), and activity scores (Constantine and Diaz \cite{constantine2017global}). Section \ref{sec:GAscore} introduces our new sensitivity measure, \textit{global activity scores}. Section~\ref{sec:numerical} presents numerical experiments applying these methods to selected examples. We conclude in Section~\ref{sec:conc}.

\section{Global Sensitivity Measures}
\label{sec:review}
In this section we review some popular types of sensitivity measures, in particular, the variance-based and derivative-based measures. For a comprehensive survey of sensitivity measures, see Iooss and Lema{\^\i}tre \cite{iooss2015review}.

\subsection{Sobol' Sensitivity Indices}\label{sobol}
Consider a $d$-dimensional random input vector $\pmb x = (x_1, \ldots, x_d)$ following the uniform distribution on $(0,1)^d$, with a square-integrable real-valued function $f(\pmb x)$ defined on $(0,1)^d$, and let the index set be defined as $I = \{1,2,\ldots, d\}$. 
For $u\subseteq I$, the notation $\pmb x^{u}$ denotes a point in $(0,1)^{|u|}$, containing all components $x_j$ for $j\in u$.

The ANOVA decomposition of $f(\pmb x)$ is
$$
f(\pmb x) = \sum_{u \subseteq I}f_u(\pmb x),
$$
where the ``component" function $f_u$ is a function that only depends on $\pmb x^u$. The domain of $f_{u}$ is $(0,1)^{|u|}$, but it can be extended to $(0,1)^d$ by setting $f_u(\pmb x)=f_u(\pmb x^u)$. For the empty set, we put
$f_{\emptyset} = \int f(\pmb x) d\pmb x$. The component functions are constructed inductively - see Sobol' \cite{sobol2001global} for details. We have
$$
\mathbb{E}[f(\pmb x)] = \int_{(0,1)^d} f(\pmb x) d\pmb x,
$$
and
$$
\text{Var}(f(\pmb x))=\sigma^2 = \int_{(0,1)^d} f^2(\pmb x) d\pmb x - \mathbb{E}[f(\pmb x)]^2.
$$
As a consequence of the orthogonality of the ANOVA decomposition, the variance of $f$ can be written as  
$$
\sigma^2 =\sum_{u \subseteq I}\sigma_u^2,
$$
where $\sigma_u^2$ is the variance of the component function $f_u$. If $u \neq \emptyset$, then
$$
\sigma_u^2 = \int_{(0,1)^d} f_u^2(\pmb x)d\pmb x - \left(\int_{(0,1)^d} f_u(\pmb x) d\pmb x\right)^2 = \int_{(0,1)^d} f_u^2(\pmb x)
d\pmb x.
$$
If $u = \emptyset$, then $\sigma_u^2=0$.
The Sobol' sensitivity indices for the subset $u$ are defined as
$$
\underline{S}_u = \frac{1}{\sigma^2}\sum_{v \subseteq u} \sigma_v^2 = \frac{\underline{\tau}_u^2}{\sigma^2}
\text{       and       }
\overline{S}_u = \frac{1}{\sigma^2}\sum_{v \bigcap u \neq \emptyset} \sigma_v^2 = \frac{\overline{\tau}_u^2}{\sigma^2}, 
$$ 
where $\underline{S}_u$ is called the lower Sobol' sensitivity index (or the closed effect) and $\overline{S}_u$ is called the upper Sobol' sensitivity index (or the total effect). 
A large value of $\underline{S}_u$ indicates that the variables $\pmb x_u$ have an important contribution to the variance of the model output. In particular, if $\underline{S}_u$ is close to 1, then $\pmb x_u$ accounts for most of the output variance.
In practice, the case where $u$ is a singleton $\{i\}$ is often considered. If $\overline{S}_{\{i\}}$ is close to 0, then $x_i$ has little influence on the output, and it is common to fix $x_i$ at a nominal value (e.g., its mean) to simplify the model $f$.

Upper Sobol' indices can be written as (Sobol' \cite{sobol1993sensitivity})
\begin{equation}\label{eq:sobol}
\bar S_i=\frac1 {2\sigma^2}\int{(f(\pmb z)-f(\pmb v_{\{i\}}{:}\pmb z_{\{-i\}}))^2d\pmb zdv_{i}},
\end{equation}
where $\pmb z$ and $\pmb v$ are independent random vectors uniformly distributed on $(0,1)^d$, and $(\pmb v_{\{i\}}{:}\pmb z_{-\{i\}})$ means the vector whose $i$th component is $v_i$ and whose $j$th component $(j\neq i)$ is $z_j$. The notations $\pmb v_{\{i\}}$ and $v_i$ both mean the $i$th component of $\pmb v$.
Eqn. (\ref{eq:sobol}) can be generalized to random vectors $\pmb x = (x_1,\ldots, x_d)$ with distribution function (cumulative distribution function) $\pmb F(\pmb x) = F_1(x_1)\cdots F_d(x_d)$ (Kucherenko et al. \cite{kucherenko2012estimation}) as
\begin{equation}
\label{sobol_index_gen}
\bar S_i=\frac1 {2\sigma^2}\int{(f(\pmb z)-f(\pmb v_{\{i\}}{:}\pmb z_{\{-i\}}))^2d\pmb F(\pmb z)dF_i(v_{i})}.
\end{equation}
where $\pmb z$ and $\pmb v$ are independent random vectors with distribution $\pmb F$.

\subsection{Derivative-based sensitivity measures}

Computationally more efficient sensitivity measures can be obtained if the partial derivatives of $f$ exist.  For example, Campolongo et al. \cite{campolongo2007effective} introduced a global sensitivity measure based on $\int_{(0,1)^d}
\left | \frac{ \partial f(\pmb x)}{\partial x_i}\right | d\pmb x$, and Sobol' and Kucherenko \cite{sobol2009derivative} introduced a global sensitivity measure based on $\int_{(0,1)^d} (\frac{\partial f(\pmb x)}{\partial x_i})^2 d\pmb x$. Here we discuss the second measure further.

The derivative-based global sensitivity measure of Sobol' and Kucherenko \cite{sobol2009derivative} is given by
\begin{equation}
v_i = \int_{(0,1)^d}\left(\frac{\partial f(\pmb x)}{\partial x_i}\right)^2 d\pmb x = \mathbb{E}\left[\left(\frac{\partial f(\pmb x)}{\partial x_i}\right)^2\right].
\label{dgsm1}
\end{equation}

Sobol' \cite{sobol2011derivative} showed that if $f(\pmb x)$ is a linear function on each of its components $x_i$, then
\begin{equation}
\overline{S}_i = \frac{1}{12}\frac{v_i}{\sigma^2}. 
\label{relationship1}
\end{equation}
In general, Sobol' and Kucherenko \cite{sobol2010derivative} established the following inequality
\begin{equation}
\label{ineq_upper}
\overline{S}_i \leq \frac{1}{\pi^2} \frac{v_i}{\sigma^2},
\end{equation}
where the constant $1/\pi^2$ arises from a Poincar\'e inequality for the uniform distribution.

The derivative-based global sensitivity measure (DGSM) can be generalized to nonuniform distributions, where the random vector $\pmb x=(x_1,\ldots x_d)$ has independent components with marginal distributions $F_1,\ldots,F_d$, and each $F_i$ has a corresponding density function $p_i$. Sobol' and Kucherenko \cite{sobol2010derivative} and Kucherenko and Iooss \cite{kucherenko2017derivative} obtained several results in this setting. In particular, the following generalization of inequality  (\ref{ineq_upper}) was obtained in 
\cite{lamboni2013derivative}
for distributions that satisfy certain regularity conditions:
\begin{equation}
\overline{S}_i \leq 4\left[\sup_{x\in \mathbb{R}} \frac{\min(F_i(x), 1-F_i(x))}{p_i(x)}\right]^2 \frac {v_i}{\sigma^2}.
\label{eq_dgsm}
\end{equation}
Improved versions of this inequality can be found in Roustant et al. \cite{roustant2017poincare}.

\subsection{Activity scores}

The active subspace (AS) method (Constantine et al. \cite{constantine2014active}) finds the important directions of a function using the gradient of the function, and uses the important directions to reduce the domain of the function to a subspace.  The activity score, introduced by Constantine and Diaz \cite{constantine2017global}, is a global sensitivity measure obtained from this information. 

Consider a square-integrable function $f(\pmb x)$ defined on $(0,1)^d$, with finite partial derivatives that are square-integrable, and gradient
$$\nabla f(\pmb x) = \left[\frac{\partial f(\pmb x)}{\partial x_1},\frac{\partial f(\pmb x)}{\partial x_2},\ldots, \frac{\partial f(\pmb x)}{\partial x_d}\right]^T. 
$$
Define the matrix $C_\text{as}$ as
\begin{equation}
C_\text{as} =  \mathbb{E}[\nabla f(\pmb x)\nabla f(\pmb x)^T],
\label{eigen_eqn}
\end{equation}
where the expectation is computed with respect to the uniform distribution. Let the eigenvalue decomposition of $C_\text{as}$ be
\[
C_\text{as}=\textbf{W}\Lambda_\text{as} \textbf{W}^T,
\]
where $\textbf{W} = [\textbf{w}_1,\ldots, \textbf{w}_d]$ is the $d\times d$ orthogonal matrix of eigenvectors, and $\Lambda_\text{as} = \mathrm{diag}(\lambda_1,\ldots, \lambda_d)$ with $\lambda_1 \geq \ldots \geq \lambda_d \geq 0$ is the diagonal matrix of eigenvalues in descending order.

Matrix $C_\text{as}$ can be approximated using the Monte Carlo method as
\begin{align}
\label{MC_as}
C_\text{as} \approx \hat{C}_\text{as} = \frac{1}{N}\sum_{j=1}^N(\nabla_{\pmb x} f(\pmb x^{({j})}))(\nabla_{\pmb x} (f(\pmb x^{(j)})))^T,
\end{align}
where the $\pmb x^{(1)},\ldots,\pmb x^{(N)}$ are a random sample of size $N$ from the density $p$.

If there exists an integer $m$ such that eigenvalues $\lambda_{m+1},\ldots,\lambda_d$ are sufficiently small, then the active subspace method approximates $f(\pmb x)$ with a lower dimensional function $g(\textbf{W}_1^T\textbf x)$ where $\textbf{W}_1$ is the $d\times m$ matrix containing the first $m$ eigenvectors. The dimension of $g$ is $m$, whereas the dimension of $f$ is $d$.

The activity score for the $i$th parameter is defined as
\begin{equation}
\alpha_i(m) = \sum_{j=1}^{m} \lambda_j w_{ij}^2,
\label{ac_1}
\end{equation}
where $\textbf{w}_j=[w_{1j}, \ldots, w_{dj}]^T$ is the $j$th eigenvector, $m\leq d$, and $i = 1,\ldots, d$.

Constantine and Diaz \cite{constantine2017global}  showed that the activity scores are bounded by DGSM 
$$
\alpha_i (m) \leq v_i,
$$
where $i=1,\ldots,d$, and the inequality becomes an equality if $m = d$.

The following inequality between the Sobol' sensitivity index and activity scores was also established in  \cite{constantine2017global}:
$$
\overline{S}_i\leq \frac{1}{\pi^2}\frac{\alpha_i(m) + \lambda_{m+1}}{\sigma^2},
$$
where $i=1,\ldots,d$, $m=1,\ldots,d-1$.
Duan and \"{O}kten \cite{duan2023derivative} note that this result can be generalized to non-uniform distributions over $\mathbb{R}^d$ using Eqn. (\ref{eq_dgsm}) as
$$
\overline{S}_i\leq 4\left[\sup_{x\in \mathbb{R}} \frac{\min(F_i(x), 1-F_i(x))}{p_i(x)}\right]^2\frac{\alpha_i(m)+\lambda_{m+1}}{\sigma^2},
$$ 
where $F_i$ and $p_i$, $i=1,\ldots,d$, are marginal distributions and density functions for the components of the random input vector $\pmb x$.

\section{Global Activity Scores}
\label{sec:GAscore}

In this section, we introduce a new global sensitivity measure: global activity scores. This measure replaces the expectations of gradients used in traditional activity scores with expectations of finite differences. We will demonstrate, through numerical experiments on noisy problems, the advantages of using finite differences over gradients.

Consider a square-integrable real-valued bounded function $f(\pmb z)$ with domain $\Omega=\Omega_1\times\Omega_2\times\cdots\times\Omega_d\subset\mathbb R^d$ and continuous second-order partial derivatives. Suppose $\Omega$ is endowed with a probability measure with a cumulative distribution function in the form $ \pmb F(\pmb z)=F_1(z_1)\cdot \ldots \cdot F_d(z_d)$, where $F_i$ are marginal distribution functions. 

For vectors $\pmb v$ and $\pmb z$ in $\mathbb R^d$, let $(\pmb v_{\{i\}}{:}\pmb z_{-\{i\}})$ denote the vector formed by setting the $i$th component equal to $v_i$, and setting all other components $v_j$ ($j\neq i$) equal to $z_j$.
Here the notations $\pmb v_{\{i\}}$ and $v_i$ both mean the $i$th component of $\pmb v$. We use the former notation only when we are splicing and replacing a component from a vector with another.

We define the operator $D_{\pmb z,i}$ as follows:
\begin{equation}
\label{def_D}
D_{\pmb z,i}f(v_{i},\pmb z)=(f(\pmb v_{\{i\}}{:}\pmb z_{-\{i\}})-f(\pmb z))/(v_{i}-z_{i}).
\end{equation}
Then, the operator $D_{\pmb z}$, acting on $f$, has the value at $(\pmb v, \pmb z)$ given by 
\begin{equation}
\label{def_D0}
D_{\pmb z}f(\pmb v,\pmb z)=[D_{\pmb z,1}f(v_{1},\pmb z),\ldots,D_{\pmb z,d}f(v_{d},\pmb z)]^T.
\end{equation}

Define the $d\times d$ matrix $\pmb C_\text{gas}$ (here ``gas" means generalized active subspace) with the operator $D_{\pmb z}$ defined in Equation (\ref{def_D0}), as
\begin{equation}
\label{def_C}
\pmb C_\text{gas}=\E[\E[(D_{\pmb z}f)(D_{\pmb z}f)^T|\pmb z]], 
\end{equation}
where the inner conditional expectation fixes $\pmb z$ and averages over the components of $\pmb v=[v_1,\ldots,v_d]^T$, and the outside expectation averages with respect to $\pmb z$. Both $\pmb v$ and $\pmb z$ follow the same continuous probability distribution $\pmb F$, and they are independent.
The necessary conditions for the existence of the integrals in $\pmb C_\text{gas}$, for the case of bivariate $f$, are discussed in the Appendix - the proofs generalize to higher dimensions in a straightforward way.

Next we discuss the estimation of $\pmb C_\text{gas}$. Each entry of the matrix $\pmb C_\text{gas}$ is a double expectation. The inner expectations on the diagonal of $\pmb C_\text{gas}$ can be expressed as a one-dimensional integral, which we approximate using a deterministic quadrature rule. The outer expectation is a multidimensional integral, estimated by Monte Carlo. 
For example, consider a bivariate function and the first entry of the matrix:
\begin{align*}
C_{11}&=\int_{\mathbb{R}^3}\left(\frac{f(v_1,z_2)-f(z_1,z_2)}{v_1-z_1}\right)^2dF_1(v_1)d\pmb F(\pmb z).
\end{align*}
To estimate this integral, generate a sample $\pmb{z}^{(i)}=(z_1^{(i)},z_2^{(i)})$ from $\pmb F$, for $i=1,\ldots,N.$ The corresponding inner integral is
\begin{align}
\label{inner_one}
\int_{\mathbb{R}}\left(\frac{f(v_1,z^{(i)}_2)-f(z^{(i)}_1,z^{(i)}_2)}{v_1-z^{(i)}_1}\right)^2dF_1(v_1).
\end{align}
This integrand has a singularity at $v_1=z^{(i)}_1$. To handle it, split the domain at the singularity into 
$(-\infty,z^{(i)}_1)$ and $(z^{(i)}_1,\infty)$, and approximate each part using a deterministic quadrature rule such as Newton-Cotes or Gauss-Legendre. Repeating this procedure for $i=1,\ldots,N$ and averaging the results yields an estimate of $C_{11}$.

To describe the estimation of an off-diagonal element of the matrix, consider the integral
\begin{align*}
C_{12}&=\int_{\mathbb{R}^4}\left(\frac{f(v_1,z_2)-f(z_1,z_2)}{v_1-z_1}\right)\left(\frac{f(z_1,v_2)-f(z_1,z_2)}{v_2-z_2}\right) dF_1(v_1)dF_2(v_2)d\pmb F(\pmb z).
\end{align*}
To estimate this integral, generate a sample $\pmb{z}^{(i)}=(z_1^{(i)},z_2^{(i)})$ from $\pmb F$, for $i=1,\ldots,N.$ The corresponding inner integral is
\begin{align}
\label{inner_two}
&\int_{\mathbb{R}^2}\left(\frac{f(v_1,z^{(i)}_2)-f(z^{(i)}_1,z^{(i)}_2)}{v_1-z^{(i)}_1}\right)\left(\frac{f(z^{(i)}_1,v_2)-f(z^{(i)}_1,z^{(i)}_2)}{v_2-z^{(i)}_2}\right)dF_1(v_1)dF_2(v_2)\\
&=\int_{\mathbb{R}}\left(\frac{f(v_1,z^{(i)}_2)-f(z^{(i)}_1,z^{(i)}_2)}{v_1-z^{(i)}_1}\right)dF_1(v_1)\int_{\mathbb{R}}\left(\frac{f(z^{(i)}_1,v_2)-f(z^{(i)}_1,z^{(i)}_2)}{v_2-z^{(i)}_2}\right)dF_2(v_2).
\end{align}
Then, compute each one-dimensional integral separately using the same methodology as for the inner integral of $C_{11}$.

Consider the eigenvalue decomposition of $\pmb C_\text{gas}$
\begin{equation}
\pmb C_\text{gas}=\pmb U\Lambda_\text{gas} \pmb U^T,
\end{equation}
where $\pmb U = [\textbf{u}_1,\ldots, \textbf{u}_d]$ is the $d\times d$ orthogonal matrix of eigenvectors, and $\Lambda_\text{gas} = \mathrm{diag}(\lambda_1,\ldots, \lambda_d)$ with $\lambda_1 \geq \ldots \geq \lambda_d \geq 0$ is the diagonal matrix of eigenvalues in descending order. 
The global activity scores are defined based on the eigenvalue decomposition of $\pmb C_\text{gas}$.

\begin{definition}
The global activity score for the $i$th parameter, $1\leq i \leq d$, is
\begin{equation}\label{equ:gas}
\gamma_i(m)=\sum_{j=1}^{m}{\lambda_ju_{ij}^2},
\end{equation}
where $\lambda_1,\ldots,\lambda_d$ are the eigenvalues from $\Lambda_\text{gas}$, $\pmb u_j = [u_{1j},\ldots,u_{dj}]^T$ is the $j$th eigenvector from $\pmb U$, and $m \leq d$.
\end{definition}

The next theorem presents an inequality between the upper Sobol' sensitivity index $\bar S_i$ and $\gamma_i(d)$.

\begin{theorem}\label{gas1}
If $\Omega =(0,1)^d$, endowed with the uniform probability measure, then
\begin{equation}
\bar S_i\leq\frac12\frac{\gamma_i(d)}{\sigma^2},
\end{equation}
for $i=1,\ldots,d$.
\end{theorem}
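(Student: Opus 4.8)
The plan is to reduce the inequality to a single pointwise comparison of integrands by first recognizing that $\gamma_i(d)$ is nothing but a diagonal entry of $\pmb C_\text{gas}$. When $m=d$, the sum $\gamma_i(d)=\sum_{j=1}^{d}\lambda_j u_{ij}^2$ is exactly the $(i,i)$ entry of $\pmb U\Lambda_\text{gas}\pmb U^T=\pmb C_\text{gas}$, since $\sum_j \lambda_j u_{ij}u_{ij}$ is the $(i,i)$ component of the spectral decomposition. Reading off the $(i,i)$ entry of $\pmb C_\text{gas}=\E[\E[(D_{\pmb z}f)(D_{\pmb z}f)^T|\pmb z]]$ and inserting the definition (\ref{def_D}) of $D_{\pmb z,i}f$, I would obtain
\begin{equation}
\gamma_i(d)=\E\left[\left(D_{\pmb z,i}f(v_i,\pmb z)\right)^2\right]=\E\left[\left(\frac{f(\pmb v_{\{i\}},\pmb z_{-\{i\}})-f(\pmb z)}{v_i-z_i}\right)^2\right],
\end{equation}
where the expectation is over $\pmb z$ (uniform on $(0,1)^d$) and $v_i$ (uniform on $(0,1)$); only the $i$th component of $\pmb v$ enters this diagonal term.

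Next I would bring in the Sobol' representation (\ref{eq:sobol}), which in expectation form reads $2\sigma^2\bar S_i=\E[(f(\pmb z)-f(\pmb v_{\{i\}},\pmb z_{-\{i\}}))^2]$. Writing $\Delta:=f(\pmb v_{\{i\}},\pmb z_{-\{i\}})-f(\pmb z)$, the target $\bar S_i\leq\tfrac12\,\gamma_i(d)/\sigma^2$ becomes equivalent to the clean statement $\E[\Delta^2]\leq\E[\Delta^2/(v_i-z_i)^2]$.

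The final, and really the only substantive, step is the pointwise bound. Since both $v_i$ and $z_i$ lie in $(0,1)$, we have $(v_i-z_i)^2<1$ almost everywhere, so $1/(v_i-z_i)^2>1$ and therefore $\Delta^2/(v_i-z_i)^2\geq\Delta^2$ pointwise. Taking expectations yields the claim.

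The essential mechanism, and the \emph{main obstacle} if there is one, is that the conclusion hinges on the domain being the unit cube: the factor $(v_i-z_i)^2$ is controlled by $1$ precisely because the marginals are supported on $(0,1)$. On a general domain this step would fail, which is exactly why Theorem~\ref{gas1} is stated for $\Omega=(0,1)^d$. The only care point is to confirm that the singular set $\{v_i=z_i\}$ is null, so that the difference quotient and the inequality are well defined almost everywhere.
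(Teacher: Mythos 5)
Your proof is correct and takes essentially the same route as the paper's: both rest on identifying $\gamma_i(d)$ with the $(i,i)$ diagonal entry of $\pmb C_\text{gas}=\pmb U\Lambda_\text{gas}\pmb U^T$ and on the pointwise bound $(v_i-z_i)^2\leq 1$ for $v_i,z_i\in(0,1)$ applied to the Sobol' representation of $\bar S_i$. The only differences are cosmetic — you unpack the diagonal entry first and then compare integrands, while the paper bounds the integral first and then reads off the diagonal — and your remark that the set $\{v_i=z_i\}$ is null is a fair point that the paper leaves implicit.
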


\begin{proof}
Consider the upper Sobol' index $\bar S_i$ (Eqn. (\ref{eq:sobol}))
\[
\bar S_i=\frac1 {2\sigma^2}\int{(f(\pmb z)-f(\pmb v_{\{i\}}{:}\pmb z_{\{-i\}}))^2d\pmb zdv_{i}}.
\]
Since $(v_i-z_i)^2\leq 1$ for any $v_i, z_i$ in $(0,1)$, we have
\[
\bar S_i\leq\frac{1}{2\sigma^2}\int{((f(\pmb z)-f(\pmb v_{\{i\}}{:}\pmb z_{\{-i\}}))/(z_{i}-v_{i}))^2}d\pmb zdv_{i}.
\]
Let $\pmb{\bar S}=[\bar S_1,\ldots,\bar S_d]^T$. We have the componentwise inequality
\[
\pmb{\bar S}\leq \frac{1}{2\sigma^2}\mathrm{diag}(\pmb C_\text{gas})=\frac{1}{2\sigma^2}\mathrm{diag}(\pmb U\Lambda_{\text{gas}} \pmb U^T)\]
where $\mathrm{diag}(A)$ is the vector obtained from the diagonal elements of the matrix $A$. Since the $i$th value of $\mathrm{diag}(\pmb U\Lambda_{\text{gas}} \pmb U^T)$ is $\gamma_i(d)$ by definition of $\gamma_i(d)$, the above equality implies
\[
\bar S_i \leq \frac{1}{2\sigma^2}\gamma_i(d)
\]
for $i=1,\ldots,d.$
\end{proof}

The following result links any $\gamma_i(m),1\leq m \leq d-1$, to the upper Sobol' index.

\begin{corollary}\label{gas2}
If $\Omega =(0,1)^d$, endowed with the uniform probability measure, then
\begin{equation}
\bar S_i\leq\frac12\frac{\gamma_i(m)+\lambda_{m+1}}{\sigma^2},
\end{equation}
where $i=1,\ldots,d$, and $\lambda_{m+1}$ is the $(m+1)$th eigenvalue of matrix $\pmb C_\text{gas}$.  
\end{corollary}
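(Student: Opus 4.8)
The plan is to deduce the corollary directly from Theorem~\ref{gas1} by controlling the tail of the sum that defines $\gamma_i(d)$. Since Theorem~\ref{gas1} already supplies $\bar S_i \le \frac{1}{2\sigma^2}\gamma_i(d)$, it suffices to establish the purely algebraic inequality $\gamma_i(d) \le \gamma_i(m) + \lambda_{m+1}$ for every $m$ with $1\le m \le d-1$ (note that $m\le d-1$ guarantees $\lambda_{m+1}$ is a genuine eigenvalue). Writing out the definition and splitting the sum at index $m$ gives
\[
\gamma_i(d)=\sum_{j=1}^{d}\lambda_j u_{ij}^2 = \gamma_i(m) + \sum_{j=m+1}^{d}\lambda_j u_{ij}^2,
\]
so the entire task reduces to bounding the tail $\sum_{j=m+1}^{d}\lambda_j u_{ij}^2$ by $\lambda_{m+1}$.

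For the tail I would combine two elementary facts. First, because the eigenvalues in $\Lambda_\text{gas}$ are listed in descending order, $\lambda_j \le \lambda_{m+1}$ whenever $j \ge m+1$, which lets me factor out the dominating eigenvalue:
\[
\sum_{j=m+1}^{d}\lambda_j u_{ij}^2 \le \lambda_{m+1}\sum_{j=m+1}^{d} u_{ij}^2.
\]
Second, since $\pmb U$ is orthogonal its rows are unit vectors, so $\sum_{j=1}^{d}u_{ij}^2 = 1$; discarding the nonnegative terms with $j \le m$ then yields $\sum_{j=m+1}^{d}u_{ij}^2 \le 1$. Putting these together gives $\sum_{j=m+1}^{d}\lambda_j u_{ij}^2 \le \lambda_{m+1}$, hence $\gamma_i(d) \le \gamma_i(m)+\lambda_{m+1}$, and substituting into Theorem~\ref{gas1} produces the claimed bound.

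There is no real obstacle here; the statement is a routine consequence of Theorem~\ref{gas1}. The only point I would state with care is the use of the row normalization $\sum_{j}u_{ij}^2 = 1$ of the orthogonal matrix $\pmb U$: the relevant sum runs over the eigenvector index $j$ with the parameter index $i$ held fixed, so it is the unit length of a \emph{row} of $\pmb U$ that is invoked, not the orthonormality of a single eigenvector. The descending ordering of the $\lambda_j$ is precisely what makes $\lambda_{m+1}$, rather than a larger eigenvalue, the correct factor to pull out of the tail.
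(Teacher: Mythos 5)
Your proof is correct and follows essentially the same route as the paper's own argument: invoke Theorem~\ref{gas1}, split $\gamma_i(d)=\gamma_i(m)+\sum_{j=m+1}^{d}\lambda_j u_{ij}^2$, bound the tail by $\lambda_{m+1}\sum_{j=m+1}^{d}u_{ij}^2$ using the descending eigenvalue order, and finish with the orthogonality of $\pmb U$. Your explicit remark that it is the unit length of a \emph{row} of $\pmb U$ (parameter index $i$ fixed, eigenvector index $j$ summed) that is being used is a point the paper leaves implicit, and it is stated correctly.
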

\begin{proof}
The proof follows from Theorem \ref{gas1}, the orthogonality of $\pmb U$, and the following inequality:
\[
\gamma_i(d)=\gamma_i(m)+\sum_{j=m+1}^{d}{\lambda_ju_{ij}^2}\leq\gamma_i(m)+\lambda_{m+1}\sum_{j=m+1}^{d}{u_{ij}^2}.
\]
\end{proof}

Theorem \ref{gas1} and Corollary \ref{gas2} can be generalized to unbounded domains and nonuniform measures. Let $\Omega = \mathbb{R}^d$, endowed with a probability distribution function in the form $\pmb F(\pmb z)=F_1(z_1)\cdot \ldots \cdot F_d(z_d)$.

\begin{theorem}\label{gas_general}
Let $f(\pmb z)$ be a bounded function on $\Omega$. For any $0<\epsilon<1$
\begin{align}
\bar S_i \leq \frac{(b'-a')^2}{2} \frac{\gamma_i(d)+\kappa}{\sigma^2},
\end{align}
where $a',b' \in \mathbb{R}$ are such that $\int_{(a',b')^d} d\pmb F(\pmb z)=1-\epsilon$, $i=1,\ldots,d$, and $\kappa$ is a positive constant given by
\begin{align}
\kappa=\frac{2\epsilon-\epsilon^2}{(b'-a')^2}\sup_{\pmb z,\pmb v\in\Omega}(f(\pmb z)-f(\pmb v))^2.   
\end{align}
\end{theorem}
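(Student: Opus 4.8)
The plan is to mimic the proof of Theorem~\ref{gas1}, but to cope with the unbounded domain by partitioning the region of integration into a ``good'' part where the increment $(v_i-z_i)^2$ is controlled, and a low-probability ``bad'' part where we discard the difference-quotient structure and use only the boundedness of $f$. Starting from the generalized representation of the upper Sobol' index,
\[
\bar S_i=\frac{1}{2\sigma^2}\int\bigl(f(\pmb z)-f(\pmb v_{\{i\}},\pmb z_{-\{i\}})\bigr)^2\,d\pmb F(\pmb z)\,dF_i(v_i),
\]
I would first rewrite the integrand via the difference quotient in (\ref{def_D}), namely $(f(\pmb z)-f(\pmb v_{\{i\}},\pmb z_{-\{i\}}))^2=(v_i-z_i)^2\,(D_{\pmb z,i}f(v_i,\pmb z))^2$. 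This is the step that exposes the $i$th diagonal entry of $\pmb C_\text{gas}$: once the factor $(v_i-z_i)^2$ is stripped off, the remaining squared difference quotient integrates (over $\pmb z$ and $v_i$) to $(\pmb C_\text{gas})_{ii}=\sum_{j}\lambda_ju_{ij}^2=\gamma_i(d)$.

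Next I would split the domain into $G=\{(\pmb z,v_i):z_i\in(a',b'),\,v_i\in(a',b')\}$ and its complement $G^c$, writing $\mu$ for the product measure $d\pmb F(\pmb z)\,dF_i(v_i)$. On $G$ we have $(v_i-z_i)^2\le(b'-a')^2$, so by nonnegativity of the integrand the contribution of $G$ is at most $(b'-a')^2\int_G(D_{\pmb z,i}f)^2\,d\mu\le(b'-a')^2\,(\pmb C_\text{gas})_{ii}=(b'-a')^2\,\gamma_i(d)$. On $G^c$ I would bound the squared increment crudely by $\Delta^2:=\sup_{\pmb z,\pmb v\in\Omega}(f(\pmb z)-f(\pmb v))^2$ (valid since $(\pmb v_{\{i\}},\pmb z_{-\{i\}})$ is again a point of $\Omega$), giving a contribution of at most $\Delta^2\,\mu(G^c)$.

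The crux is to verify $\mu(G^c)\le 2\epsilon-\epsilon^2$, and this is the one place where the product-measure hypothesis is used essentially. Putting $p_i=\int_{a'}^{b'}dF_i$, independence gives $\mu(G)=p_i^2$, hence $\mu(G^c)=1-p_i^2$. Since $\pmb F$ factorizes and $\int_{(a',b')^d}d\pmb F=\prod_{k=1}^d\int_{a'}^{b'}dF_k=1-\epsilon$ with every factor at most $1$, we obtain $p_i\ge 1-\epsilon$, so that $\mu(G^c)=1-p_i^2\le 1-(1-\epsilon)^2=2\epsilon-\epsilon^2$. I expect this observation---that the single-coordinate probability dominates the full-box probability---to be the main (if brief) obstacle, as it is exactly what converts the $d$-dimensional hypothesis into the clean constant $2\epsilon-\epsilon^2$. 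Combining the two estimates then yields
\[
\bar S_i\le\frac{1}{2\sigma^2}\bigl[(b'-a')^2\gamma_i(d)+(2\epsilon-\epsilon^2)\Delta^2\bigr]=\frac{(b'-a')^2}{2}\frac{\gamma_i(d)+\kappa}{\sigma^2},
\]
which is the claimed inequality.
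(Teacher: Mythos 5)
Your proof is correct, and its overall strategy matches the paper's: rewrite the squared increment as $(v_i-z_i)^2\,(D_{\pmb z,i}f)^2$, split the domain into a region where $(v_i-z_i)^2\le(b'-a')^2$ and a low-probability remainder, bound the remainder via the boundedness of $f$, and identify $\int (D_{\pmb z,i}f)^2\,d\pmb F(\pmb z)\,dF_i(v_i)$ with the diagonal entry $(\pmb C_\text{gas})_{ii}=\gamma_i(d)$. The one genuine difference is the choice of splitting set. The paper lifts the integral to $\Omega\times\Omega$ with measure $d\pmb F(\pmb z)\,d\pmb F(\pmb v)$ and splits on the full box $\Omega'\times\Omega'=(a',b')^d\times(a',b')^d$, so the hypothesis $\int_{(a',b')^d}d\pmb F(\pmb z)=1-\epsilon$ pins down the bad-set measure exactly as $1-(1-\epsilon)^2=2\epsilon-\epsilon^2$, with no per-coordinate argument needed. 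You instead stay with the $(d+1)$-dimensional measure $d\pmb F(\pmb z)\,dF_i(v_i)$ and restrict only the two coordinates that matter, $z_i$ and $v_i$; this forces your extra (correct) observation that $p_i=\int_{a'}^{b'}dF_i\ge\prod_{k=1}^d\int_{a'}^{b'}dF_k=1-\epsilon$, since each factor is at most one. What your variant buys: the bad set is smaller, so your argument in fact establishes the slightly sharper bound in which $2\epsilon-\epsilon^2$ is replaced by $1-p_i^2\le 2\epsilon-\epsilon^2$, showing the theorem's constant is not tight whenever $F_i$ puts more than $1-\epsilon$ of its mass on $(a',b')$. What the paper's version buys is economy: the complement's measure follows immediately from the hypothesis, at the cost of working in $2d$ dimensions and discarding more of the domain than necessary.
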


\begin{proof}
From the definition of generalized upper Sobol' indices (Eqn. (\ref{sobol_index_gen})), we have
\begin{align*}
\bar S_i&=\frac1 {2\sigma^2}\int{(f(\pmb z)-f(\pmb v_{\{i\}}{:}\pmb z_{\{-i\}}))^2d\pmb F(\pmb z)dF_i(v_{i})}\\
&=\frac1 {2\sigma^2}\int_{\Omega\times\Omega}{(f(\pmb z)-f(\pmb v_{\{i\}}{:}\pmb z_{\{-i\}}))^2d\pmb F(\pmb z)d\pmb F(\pmb v)}.
\end{align*}
Let $\Omega'=(a',b')^d$, and write $\Omega\times\Omega$ as the union of $\Omega'\times\Omega'$ and the complement $(\Omega'\times\Omega')^c$. Since $(z_i-v_i)^2 \leq (b'-a')^2$, for any $z_i,v_i \in (a',b')$, we have
\begin{align*}
&\int_{\Omega'\times\Omega'}{(f(\pmb z)-f(\pmb v_{\{i\}}{:}\pmb z_{\{-i\}}))^2 d\pmb F(\pmb z)d\pmb F(\pmb v)}\\
&\leq (b'-a')^2 \int_{\Omega'\times\Omega'}{((f(\pmb z)-f(\pmb v_{\{i\}}{:}\pmb z_{-\{i\}}))/(z_{i}-v_{i}))^2d\pmb F(\pmb z)d\pmb F(\pmb v)}.
\end{align*}
The integral over the complement of $\Omega'\times\Omega'$ can be bounded by
\begin{align*}
&\int_{(\Omega'\times\Omega')^c}{(f(\pmb z)-f(\pmb v_{\{i\}}{:}\pmb z_{\{-i\}}))^2d\pmb F(\pmb z)d\pmb F(\pmb v)}
\leq (b'-a')^2 \kappa.
\end{align*}

Using the bounds for the two integrals and dividing by $2\sigma^2$ we obtain
\begin{align}
\label{proof_upper}
\bar S_i &\leq \frac{(b'-a')^2}{2 \sigma^2}\int_{\Omega \times \Omega}{((f(\pmb z)-f(\pmb v_{\{i\}}{:}\pmb z_{-\{i\}}))/(z_{i}-v_{i}))^2d\pmb F(\pmb z)d\pmb F(\pmb v)}+ \frac{(b'-a')^2}{2 \sigma^2} \kappa \\
&= \frac{(b'-a')^2}{2 \sigma^2}\int_{\Omega \times \mathbb{R}}{((f(\pmb z)-f(\pmb v_{\{i\}}{:}\pmb z_{-\{i\}}))/(z_{i}-v_{i}))^2d\pmb F(\pmb z)dF_i(v_{i})}+ \frac{(b'-a')^2}{2 \sigma^2} \kappa. \nonumber
\end{align}
Let $\pmb{\bar S}=[\bar S_1,\ldots,\bar S_d]^T$. Inequality (\ref{proof_upper}) can be written as
\begin{align*}
\pmb{\bar S}&\leq \frac{(b'-a')^2}{2\sigma^2} \left(\mathrm{diag}(\pmb C_\text{gas})+\kappa\pmb{1_d}\right)
=\frac{(b'-a')^2}{2\sigma^2} \left(\mathrm{diag}(\pmb U\Lambda_\text{gas} \pmb U^T)+\kappa\pmb{1_d}\right),
\end{align*}
where $\pmb{1_d}$ denotes the $d$-dimensional vector of ones. Since $\gamma_i(d)$ is the $i$th value of $\mathrm{diag}(\pmb U\Lambda_{\text{gas}} \pmb U^T)$, the proof is over.
\end{proof}

\begin{remark}
If the domain of $f$ is a finite rectangle $(a,b)^d$, then the integral over the unbounded domain in the proof of Theorem \ref{gas_general} vanishes, and we obtain the bound
\begin{align*}
\bar S_i \leq \frac{(b-a)^2}{2} \frac{\gamma_i(d)}{\sigma^2}.
\end{align*}
If we put $a=0,b=1$ in the above inequality, we obtain the bound in Theorem \ref{gas1}.
\end{remark}

For any $m$, $1\leq m \leq d-1$, we have the following inequality for $\gamma_i(m)$ and $\bar S_i$.

\begin{corollary}
\label{cor}
With the same assumptions as in Theorem \ref{gas_general}, we have
\begin{equation}
\bar S_i\leq \frac{(b'-a')^2}{2}\frac{\gamma_i(m)+\lambda_{m+1}+\kappa}{\sigma^2},
\end{equation}
where $i=1,\ldots,d$ and $\lambda_{m+1}$ is the $(m+1)$th eigenvalue of matrix $\pmb C_\text{gas}$.  
\end{corollary}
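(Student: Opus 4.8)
The plan is to obtain this corollary by combining the global bound of Theorem \ref{gas_general} with the same eigenvalue-truncation estimate that turned Theorem \ref{gas1} into Corollary \ref{gas2}. First I would invoke Theorem \ref{gas_general} directly, which under the stated assumptions already gives
\[
\bar S_i \leq \frac{(b'-a')^2}{2}\frac{\gamma_i(d)+\kappa}{\sigma^2}.
\]
The only task that remains is to replace the full activity score $\gamma_i(d)$ by the truncated score $\gamma_i(m)$ at the cost of a controllable additive error.

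Next I would bound the tail of the spectral sum exactly as in Corollary \ref{gas2}. Splitting the definition of $\gamma_i(d)$ at the index $m$ gives
\[
\gamma_i(d)=\gamma_i(m)+\sum_{j=m+1}^{d}\lambda_j u_{ij}^2.
\]
Because the eigenvalues are arranged in descending order, $\lambda_j\leq\lambda_{m+1}$ for every $j\geq m+1$, so the tail is at most $\lambda_{m+1}\sum_{j=m+1}^{d}u_{ij}^2$. Orthogonality of $\pmb U$ forces each row to be a unit vector, so $\sum_{j=m+1}^{d}u_{ij}^2\leq\sum_{j=1}^{d}u_{ij}^2=1$, and therefore $\gamma_i(d)\leq\gamma_i(m)+\lambda_{m+1}$.

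Finally I would substitute this estimate into the bound from Theorem \ref{gas_general}. Since $(b'-a')^2/(2\sigma^2)$ is a nonnegative factor, replacing $\gamma_i(d)$ by the larger quantity $\gamma_i(m)+\lambda_{m+1}$ preserves the inequality and yields
\[
\bar S_i\leq\frac{(b'-a')^2}{2}\frac{\gamma_i(m)+\lambda_{m+1}+\kappa}{\sigma^2}
\]
for $i=1,\ldots,d$, which is exactly the claim.

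I do not expect a genuine obstacle here, since the argument is a direct composition of two results already in hand. The only point worth noting is that the additive constant $\kappa$ supplied by Theorem \ref{gas_general} enters the bound outside the spectral decomposition of $\pmb C_\text{gas}$ and is untouched by the truncation step, so the two estimates combine cleanly with no interaction between $\kappa$ and the tail eigenvalues.
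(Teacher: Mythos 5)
Your proposal is correct and follows exactly the paper's route: the paper proves this corollary by citing Theorem \ref{gas_general} together with the eigenvalue-truncation argument of Corollary \ref{gas2}, which is precisely the estimate $\gamma_i(d)\leq\gamma_i(m)+\lambda_{m+1}$ you derive from the descending eigenvalue order and the orthogonality of $\pmb U$. Your write-up simply makes explicit the steps the paper leaves compressed, including the correct observation that $\kappa$ passes through the truncation untouched.
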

\begin{proof}
The proof follows from Theorem \ref{gas_general}, as in the proof of Corollary \ref{gas2}.
\end{proof}

\begin{remark}
If the domain of $f$ is a finite rectangle $(a,b)^d$, then the bound in Corollary \ref{cor} simplifies as
\begin{align*}
\bar S_i\leq \frac{(b-a)^2}{2}\frac{\gamma_i(m)+\lambda_{m+1}}{\sigma^2}.
\end{align*}
If we put $a=0,b=1$ in the above inequality, we obtain the bound in Corollary \ref{gas2}.
\end{remark}

The next theorem presents a much simpler relationship between upper Sobol' indices and global activity scores for quadratic functions.

\begin{theorem}\label{gas3}
If $\Omega =\mathbb R^d$ and $\pmb{z}$ has the multivariate standard normal probability distribution $\pmb F = (F_1,\ldots,F_d)$, and $f(\pmb z)=\frac12\pmb z^T\pmb{Az}+\pmb{b}^T \pmb{z}$, where $\pmb A$ is a symmetric $d\times d$ matrix, then
\begin{equation}
\bar S_i=\frac{\gamma_i(d)}{\sigma^2},
\end{equation}
for $i=1,\ldots,d$.
\end{theorem}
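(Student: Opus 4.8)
The plan is to reduce the claimed equality to an explicit second-moment computation against the Gaussian measure, exploiting the quadratic form of $f$. The starting point is that the right-hand side is governed by the diagonal of $\pmb C_\text{gas}$: exactly as in the proof of Theorem~\ref{gas1}, $\gamma_i(d)=(\pmb U\Lambda_\text{gas}\pmb U^T)_{ii}=(\pmb C_\text{gas})_{ii}=\E[(D_{\pmb z,i}f)^2]$. At the same time, the generalized upper Sobol' index satisfies $f(\pmb z)-f(\pmb v_{\{i\}},\pmb z_{-\{i\}})=(z_i-v_i)\,D_{\pmb z,i}f$ by the very definition of $D_{\pmb z,i}f$, so that $\bar S_i\,\sigma^2=\tfrac12\E[(z_i-v_i)^2(D_{\pmb z,i}f)^2]$, the expectation depending only on $(\pmb z,v_i)$. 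Hence the theorem is equivalent to the identity $\tfrac12\E[(z_i-v_i)^2(D_{\pmb z,i}f)^2]=\E[(D_{\pmb z,i}f)^2]$, with the common factor $\sigma^2$ cancelling from both sides.

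First I would compute $D_{\pmb z,i}f$ in closed form. Because $f$ is quadratic, replacing only the $i$th coordinate $z_i$ by $v_i$ changes $f$ by an amount that factors exactly as $(v_i-z_i)$ times an affine expression, giving $D_{\pmb z,i}f=\sum_{k\neq i}A_{ik}z_k+\tfrac12 A_{ii}(v_i+z_i)+b_i$. Writing $R=\sum_{k\neq i}A_{ik}z_k+b_i$, $W=z_i+v_i$, and $U=z_i-v_i$, this becomes the convenient split $D_{\pmb z,i}f=R+\tfrac12 A_{ii}W$, separating a part $R$ depending only on $\{z_k:k\neq i\}$ from a part built from $z_i,v_i$.

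Next I would evaluate the two expectations using the independence structure of the standard normal input. The key point, which is special to the Gaussian measure, is that for independent standard normals $z_i,v_i$ the variables $W=z_i+v_i$ and $U=z_i-v_i$ are jointly Gaussian and uncorrelated, hence independent, and both are independent of $R$; moreover $\E[U^2]=\E[W^2]=2$. For the right-hand side this yields $\E[(D_{\pmb z,i}f)^2]=\E[R^2]+\tfrac14 A_{ii}^2\E[W^2]=\E[R^2]+\tfrac12 A_{ii}^2$, the cross term $A_{ii}\E[RW]$ vanishing because $\E[W]=0$. For the Sobol' side the factor $U^2$ decouples: $\E[U^2R^2]=\E[U^2]\E[R^2]=2\E[R^2]$, the cross term $\E[U^2RW]=\E[R]\E[U^2]\E[W]=0$, and $\E[U^2W^2]=\E[U^2]\E[W^2]=4$, so $\tfrac12\E[U^2(D_{\pmb z,i}f)^2]=\E[R^2]+\tfrac12 A_{ii}^2$ as well. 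Since $\E[R^2]=\sum_{k\neq i}A_{ik}^2+b_i^2$, the two sides coincide and equal $\gamma_i(d)$, completing the proof.

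The routine part is the bookkeeping of Gaussian second moments; the conceptual crux, and the step I expect to carry the weight, is the decoupling afforded by the independence of $z_i+v_i$ and $z_i-v_i$ together with the vanishing of the mean-zero cross terms. This is precisely what upgrades the general inequalities of Theorem~\ref{gas_general} and Corollary~\ref{cor} to an exact equality here: for a non-Gaussian product measure the difference $z_i-v_i$ and the quantity $D_{\pmb z,i}f$ no longer factor through independent pieces, the cross term need not vanish, and $\E[(z_i-v_i)^2]$ need not equal $2$, so only an inequality survives.
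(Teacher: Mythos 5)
Your proof is correct and takes essentially the same route as the paper's: the paper writes the divided difference $D_{\pmb z,i}f$ as the midpoint derivative $\partial f(\pmb x_i)/\partial z_i$ with $\pmb x_i=(z_1,\ldots,\frac{z_i+v_i}{2},\ldots,z_d)$, which is exactly your $R+\tfrac12 A_{ii}W$, and then invokes the same Gaussian facts you do — independence of $z_i-v_i$ from $z_i+v_i$ and the $z_j$, $j\neq i$, plus $\E[(z_i-v_i)^2]=2$ — and the identification $\gamma_i(d)=(\pmb C_\text{gas})_{ii}$. The only difference is cosmetic: the paper factors $\E[(z_i-v_i)^2(D_{\pmb z,i}f)^2]=\E[(z_i-v_i)^2]\,\E[(D_{\pmb z,i}f)^2]$ in one step, whereas you verify the same identity by expanding and checking each Gaussian moment.
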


\begin{proof}
We have $\pmb z=(z_1,\ldots,z_d)^T$, and $\pmb b = (b_1,\ldots,b_d)$. Define
\[
\pmb x_i=(z_1,\ldots,z_{i-1},\frac{z_i+v_i}{2},z_{i+1},\ldots,z_d)^T.
\]
Then
\begin{align*}
2\sigma^2\bar S_i&=\int{(f(\pmb z)-f(\pmb v_{\{i\}}{:}\pmb z_{\{-i\}}))^2d\pmb F(\pmb z)dF_i(v_{i})}\\
&=\int{\left(\frac{\partial f(\pmb x_i)}{\partial z_i}\right)^2(z_i-v_i)^2d\pmb F(\pmb z)dF_i(v_{i})}\\
&=\E\left[\left(\frac{\partial f(\pmb x_i)}{\partial z_i}\right)^2(z_i-v_i)^2\right].
\end{align*}
Using the independence of $z_i-v_i$, $z_i+v_i$, and $z_j$, with $j\neq i$, we write the above expectation as
\begin{equation*}
2\sigma^2\bar S_i=\E\left[\left(\frac{\partial f(\pmb x_i)}{\partial z_i}\right)^2\right]\times \E[(z_i-v_i)^2]=2\E\left[\left(\frac{\partial f(\pmb x_i)}{\partial z_i}\right)^2\right].
\end{equation*}
Finally, we observe,
\begin{align*}
\gamma_i(d)&=\int{((f(\pmb z)-f(\pmb v_{\{i\}}{:}\pmb z_{\{-i\}}))/(z_{i}-v_{i}))^2d\pmb F(\pmb z)dF_i(v_{i})}\\
&=\int{\left(\frac{\partial f(\pmb x_i)}{\partial z_i}\right)^2d\pmb F(\pmb z)dF_i(v_{i})}\\
&=\sigma^2 \bar{S}_i.
\end{align*}
\end{proof}


\begin{remark}
For the activity scores, the relationship between the upper Sobol' index and the activity score for quadratic functions is an inequality. Indeed, under the same hypothesis of Theorem \ref{gas3}, Liu and Owen \cite{liu2023preintegration} shows that
\[
\bar S_i\leq \frac{\alpha_i(d)}{\sigma^2},
\]
for $i=1,\ldots,d$.
\end{remark}

\section{Numerical Results}
\label{sec:numerical}
In this section we compare four sensitivity measures; Sobol' indices, DGSMs, activity scores, and global activity scores, numerically, when they are used to conduct the global sensitivity analysis in some examples. 

We use the algorithm in Sobol' \cite{sobol2001global} to estimate the upper Sobol' indices, and the Correlation-2 algorithm in Owen \cite{owen2013better} for the lower Sobol' indices. To reduce computational cost, we use the same sample to estimate all indices rather than generating an independent sample for each one. We use this approach for all the sensitivity measures. With this approach, when $N$ Monte Carlo samples are generated, the number of function evaluations is $(d+1) N$ for upper Sobol', and $(2d+2) N$ for lower Sobol' indices. To compute the DGSM (Eqn. (\ref{dgsm1})) using Monte Carlo, we generate $N$ samples, and for each sample we estimate derivatives using forward finite differences with an increment of $h=0.001$. The total number of function evaluations is $(d+1) N$.

To compute the activity scores $\alpha_i(m)$ (Eqn. (\ref{ac_1})), we use the approach of Constantine et al. \cite{constantine2014active}. We compute the SVD of the $d\times N$ matrix $[\nabla f(\pmb x^{(1)})\cdots \nabla f(\pmb x^{(N)})]$, where $x^{(i)},i=1,\ldots,N$, are the Monte Carlo samples, rather than computing the eigenvalue decomposition of $\pmb C_{\text{as}}$. We use forward finite differences with an increment of $h=0.001$ to estimate the derivatives. The total number of function evaluations is $(d+1) N$. 

To compute the global activity scores $\gamma_i(m)$ (Eqn.~(\ref{equ:gas})), we directly estimate $\pmb C_{\text{gas}}$ and compute its eigenvalue decomposition. To estimate the integrals in $\pmb C_{\text{gas}}$, we split the one-dimensional inner integrals at their singularities (see (\ref{inner_one}, \ref{inner_two})), and then estimate each of the resulting integrals using a 5-point Gauss-Legendre rule. The outer integral is estimated using a Monte Carlo sample of size $N$. The total number of function evaluations in estimating $\pmb C_{\text{gas}}$ is $(10d+1) N$ (note that the estimation of the integrals share many common function evaluations).
Computer codes for some of the examples are available at: \url{https://github.com/RuilongYue/global-activity-scores}.

An important step in the calculation of activity and global activity scores is determining the dimension $m$ of the active subspace and global active subspace that approximates the original input domain. Consider the eigenvalues $\lambda_1 \geq \ldots \geq \lambda_d \geq 0$ of $\Lambda_\text{as}$ and $\Lambda_\text{gas}$. Define the $m$th normalized cumulative sum of eigenvalues as $\sum_{k=1}^m\hat\lambda_k/\sum_{k=1}^d\hat\lambda_k$, where $\hat\lambda_k$ is the $k$th estimated eigenvalue. In the following numerical results, for each method, we will pick $m$ as the smallest integer for which the cumulative normalized sum of eigenvalues for the corresponding method is greater than $90\%$.

Appendix \ref{appendixB} summarizes the number of function evaluations and computational time for all the examples considered in this section. 

\subsection{Example 1: A test function with noise}
Consider the following function
\begin{equation}
f(\pmb z)=\sum_{i=1}^{10}iz_i+10(z_1z_2 - z_9z_{10})+k\epsilon,\epsilon\sim N(0,1),
\end{equation}
where $\pmb z = (z_1,z_2,\ldots,z_{10})$. The function has a linear component with inputs in increasing order of importance, and a component with second order interactions. The inputs are independent and have the uniform distribution on $(-0.5,0.5)$. We assume the function is evaluated with some noise modeled by the term $k\epsilon$, where $k$ is a positive constant and $\epsilon$ is a standard normal random variable. Akin to the global sensitivity analysis of stochastic codes (see Fort et al. \cite{fort2021global}, Hart et al. \cite{hart2017efficient}, and Nanty et al. \cite{nanty2016sampling}), we want to investigate the accuracy of various global sensitivity measures as the level of noise, $k$, increases. In the numerical results that follow, we consider four cases: no noise ($k=0$), low noise ($k=0.01$), moderate noise ($k=0.1$), and high noise ($k=1$).

In the numerical results, we use a Monte Carlo sample size of $N=20{,}000$ for the Sobol' indices, DGSM, and activity scores. For the global activity scores, we use a Monte Carlo sample size of $N=2{,}000$. 

\subsubsection{No noise ($k=0$)}

Fig. \ref{fig:1O} plots the normalized cumulative sum of eigenvalues of $\pmb{C}_{\text{as}}$ and $\pmb{C}_{\text{gas}}$, and the components of the eigenvector that corresponds to the largest eigenvalue for each method. The methods give virtually identical results.

\begin{figure}[h] 
\centering
\subfloat{\includegraphics[width=0.5\textwidth]{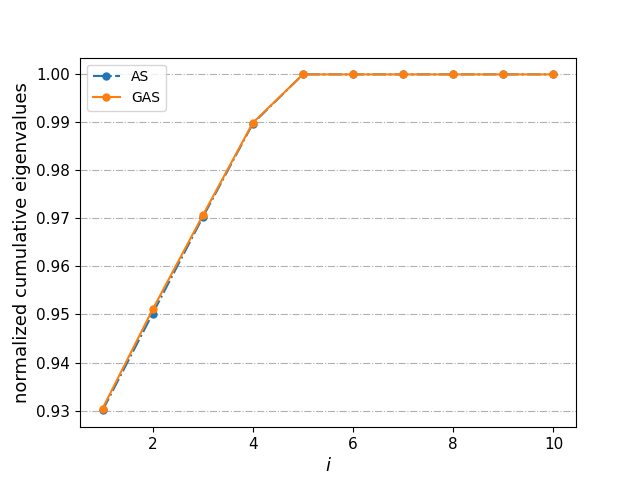}}
\subfloat{\includegraphics[width=0.5\textwidth]{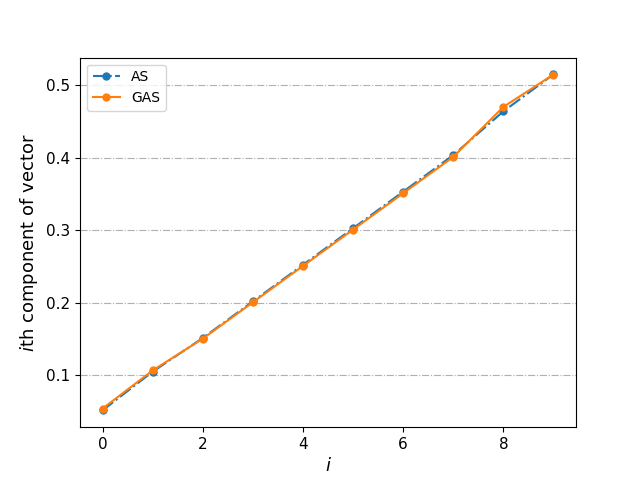}}\\
\caption{Normalized cumulative sum of eigenvalues (left) and the first eigenvector (right) of $\pmb C_\text{as}$ and $\pmb C_\text{gas}$, when $k=0$, in Example 1}
\label{fig:1O}
\end{figure}

Fig. \ref{fig:itr_noise_1} plots Sobol' lower and upper indices, normalized DGSMs, normalized activity scores, and normalized global activity scores.
Normalization is done by dividing each score by the sum of the corresponding scores for DGSMs, activity scores, and global activity scores. We do not normalize Sobol' indices, for we want to see the unscaled differences between lower and upper Sobol' indices. For the global activity scores plot, we include the results for $m=1$ and $m=10$. For the activity scores, we only show the results for $m=1$, since $m=10$ for activity scores corresponds to DGSM. 

The upper Sobol' indices and DGSM follow the same pattern. When $m=1$, the activity scores and global activity scores are virtually identical, and they devalue the importance of the first two parameters compared to Sobol' indices and DGSMs, leading to a discrepancy in the ranking of the least three important inputs. When $m=10$, however, all the methods give the same ranking for the importance of inputs.

\begin{figure}[h] 
\centering
\subfloat[Sobol' indices]{\includegraphics[width=0.5\textwidth]{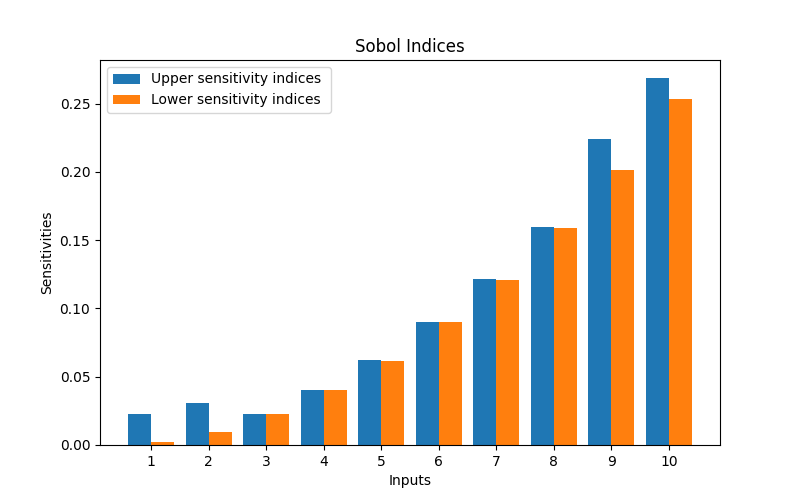}}
\subfloat[Normalized DGSMs]{\includegraphics[width=0.5\textwidth]{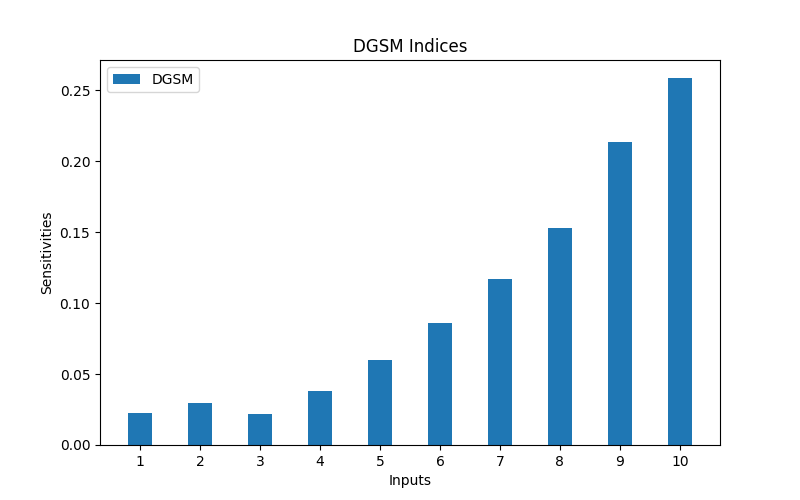}}\\
\subfloat[Normalized activity scores]{\includegraphics[width=0.5\textwidth]{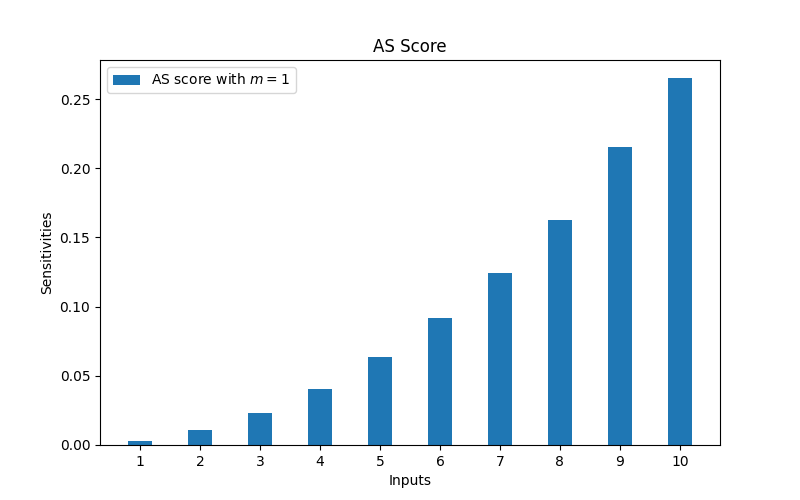}}
\subfloat[Normalized global activity scores]{\includegraphics[width=0.5\textwidth]{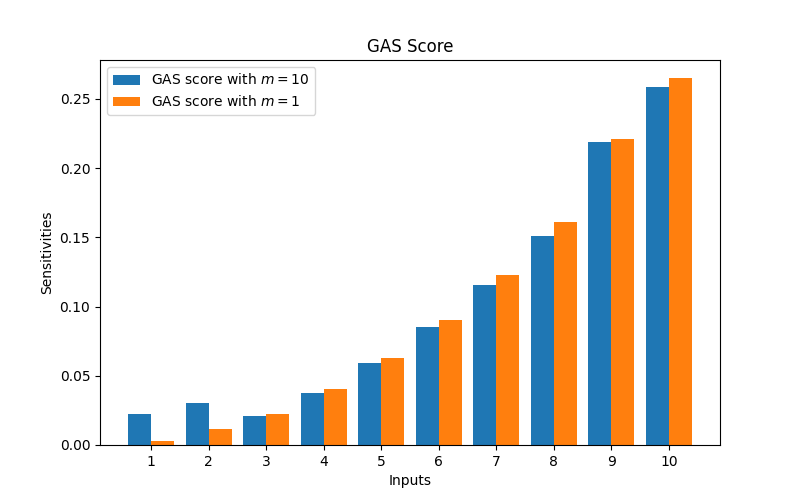}}\\
\caption{Sensitivity indices when $k=0$, in Example 1}
\label{fig:itr_noise_1}
\end{figure}

\subsubsection{Low noise ($k=0.01$)}

Fig. \ref{fig:O01} plots the normalized cumulative sum of eigenvalues of $\pmb{C}_{\text{as}}$ and $\pmb{C}_{\text{gas}}$ in the presence of low noise. We observe the methods have significantly different eigenvalues.

\begin{figure}[h] 
\centering
\subfloat{\includegraphics[width=0.5\textwidth]{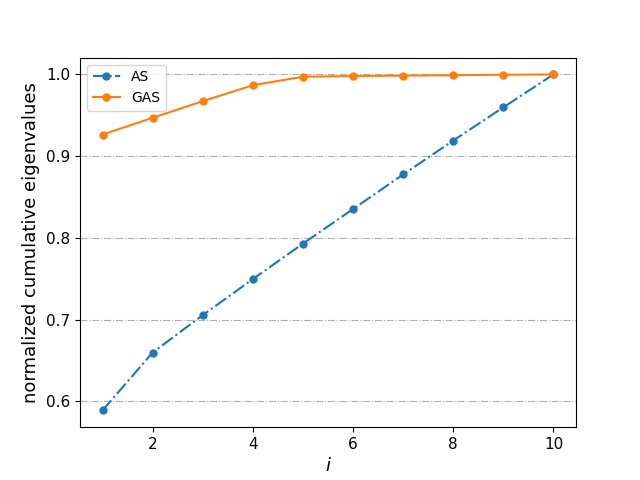}}
\subfloat{\includegraphics[width=0.5\textwidth]{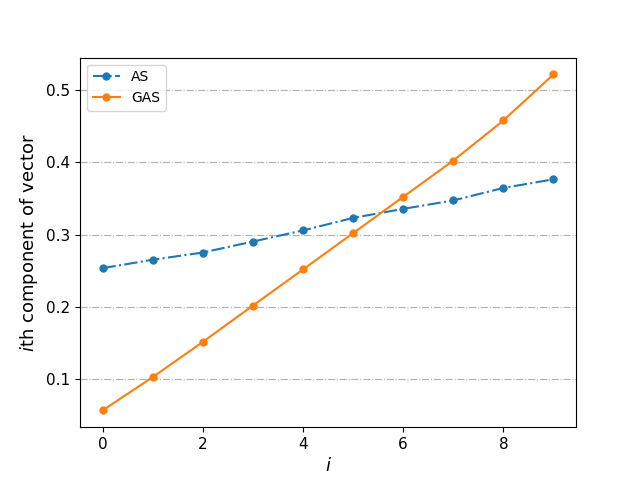}}\\
\caption{Normalized cumulative sum of eigenvalues (left) and the first eigenvector (right) of $\pmb C_\text{as}$ and $\pmb C_\text{gas}$, when $k=0.01$, in Example 1}
\label{fig:O01}
\end{figure}

Fig. \ref{fig:itr_noise_2} plots the sensitivity indices. Based on the cumulative normalized eigenvalue criterion discussed earlier, we pick $m=8$ for the activity scores, and $m=1$ for the global activity scores. With the introduction of noise, we observe that DGSM and activity scores do not look similar to Sobol' indices (or global activity scores) anymore. Even though DGSM still ranks most of the inputs as the upper Sobol' indices, the relative differences between the importance of the inputs are very different than that of Sobol' indices. The activity scores with $m=8$ rank several inputs differently than the Sobol' upper indices.

The global activity scores, on the other hand, do not seem to be adversely affected by noise.
When $m=10$, the global activity scores and the upper Sobol' indices are virtually indistinguishable. When $m=1$, the global activity scores rank the first three inputs (which are the least important) differently compared to upper Sobol' indices, and interestingly follow the same pattern as lower Sobol' indices.

\begin{figure}[h] 
\centering
\subfloat[Sobol' indices]{\includegraphics[width=0.5\textwidth]{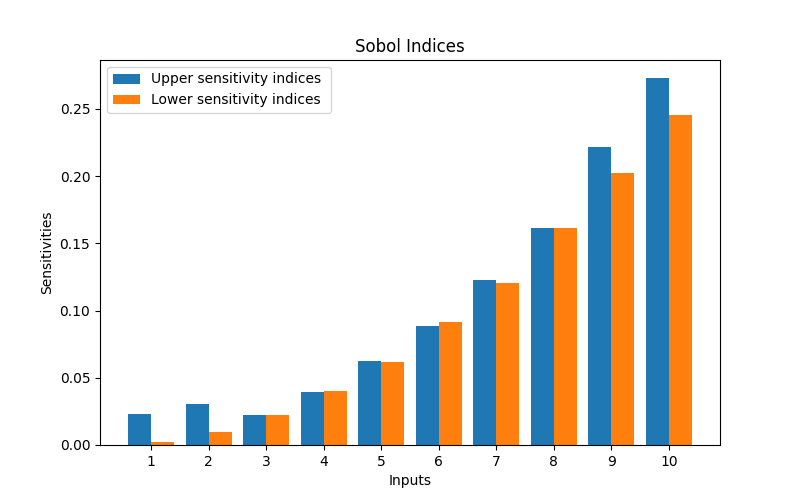}}
\subfloat[Normalized DGSMs]{\includegraphics[width=0.5\textwidth]{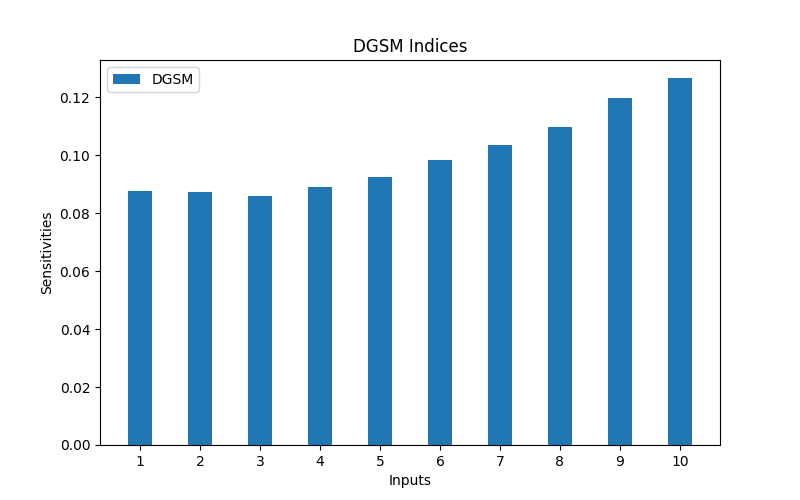}}\\
\subfloat[Normalized activity scores]{\includegraphics[width=0.5\textwidth]{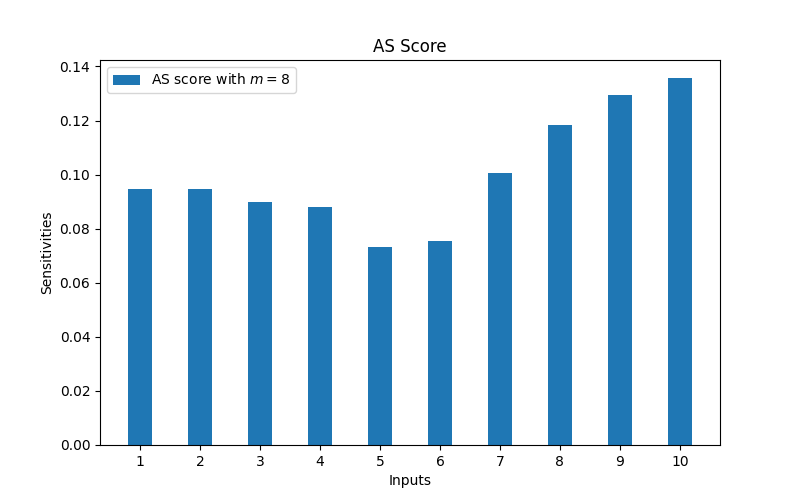}}
\subfloat[Normalized global activity scores]{\includegraphics[width=0.5\textwidth]{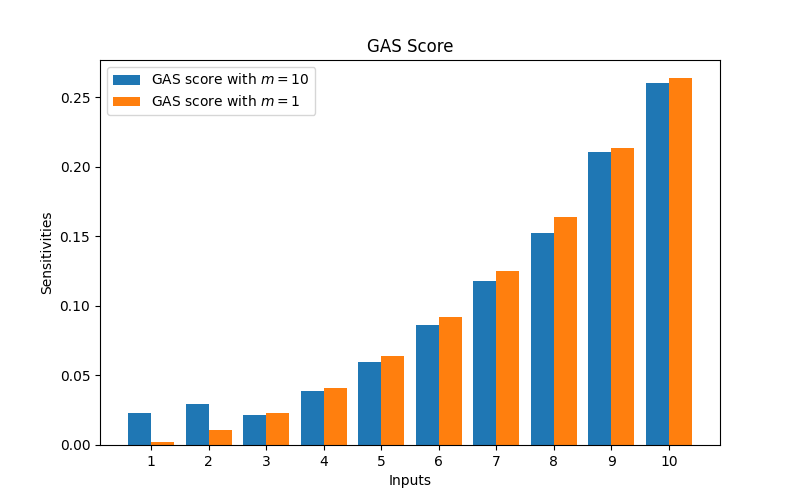}}\\
\caption{Sensitivity indices when $k=0.01$, in Example 1}
\label{fig:itr_noise_2}
\end{figure}

\subsubsection{Moderate noise ($k=0.1$)}
Fig. \ref{fig:01} plots the eigenvalues and the first eigenvector, and Fig. \ref{fig:itr_noise_3_5} plots the sensitivity indices for the case of high noise. Based on the eigenvalues, we pick $m=8$ for activity scores, and $m=7$ for global activity scores. DGSM and activity scores fail in identifying the important variables. Global activity scores with $m=10$ ranks input parameters the same way as upper Sobol' indices, except for the ranking of inputs 1 and 6.

\begin{figure}[h] 
\centering
\subfloat{\includegraphics[width=0.5\textwidth]{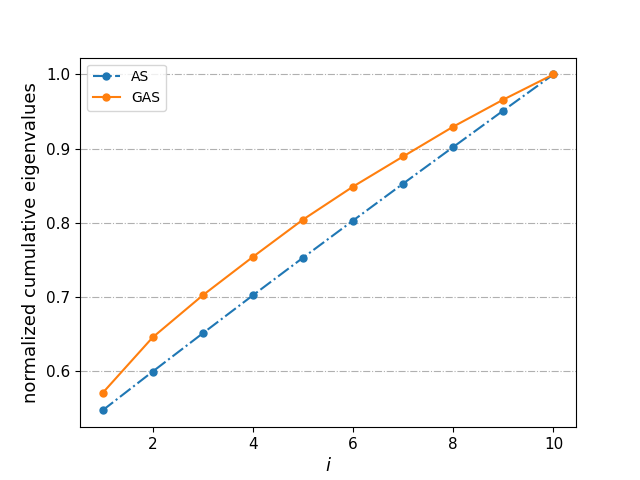}}
\subfloat{\includegraphics[width=0.5\textwidth]{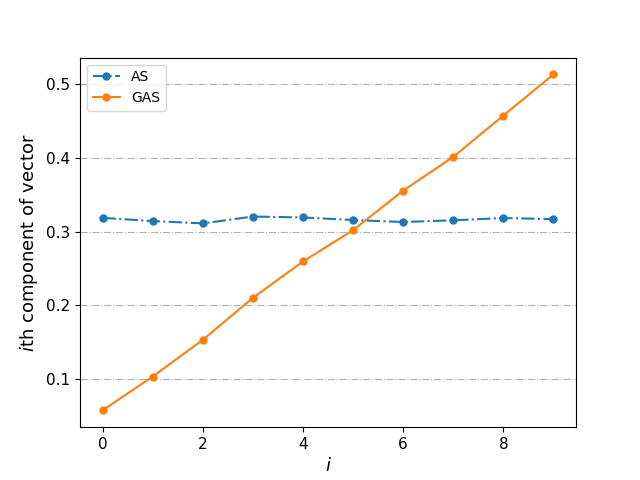}}\\
\caption{Normalized cumulative sum of eigenvalues (left) and the first eigenvector (right) of $\pmb C_\text{as}$ and $\pmb C_\text{gas}$, when $k=0.1$, in Example 1}
\label{fig:01}
\end{figure}

\begin{figure}[h] 
\centering
\subfloat[Sobol' indices]{\includegraphics[width=0.5\textwidth]{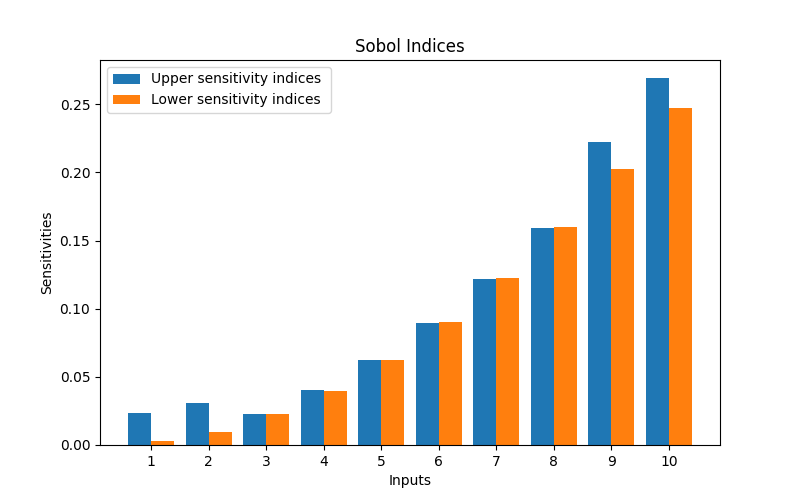}}
\subfloat[Normalized DGSMs]{\includegraphics[width=0.5\textwidth]{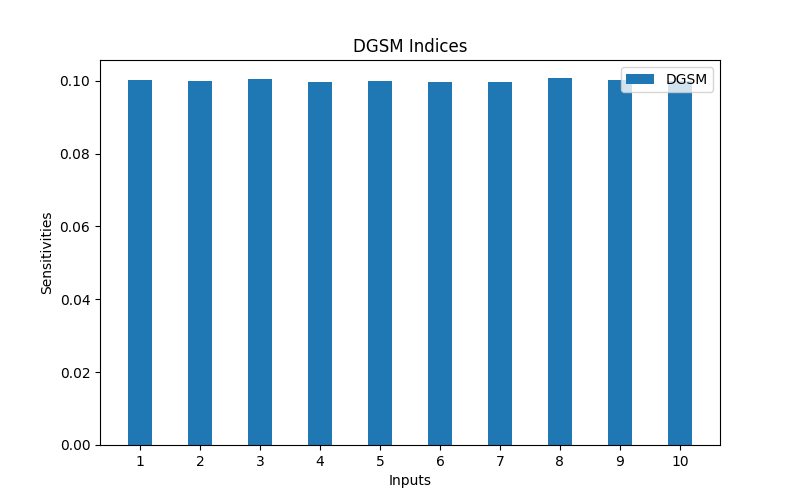}}\\
\subfloat[Normalized activity scores]{\includegraphics[width=0.5\textwidth]{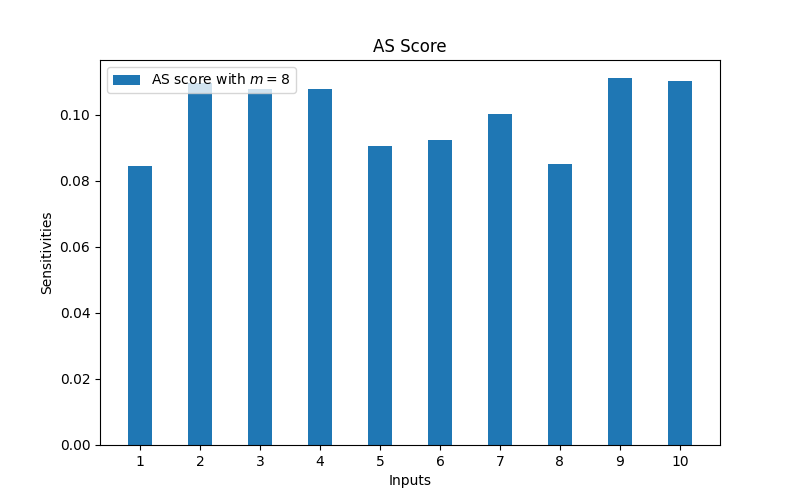}}
\subfloat[Normalized global activity scores]{\includegraphics[width=0.5\textwidth]{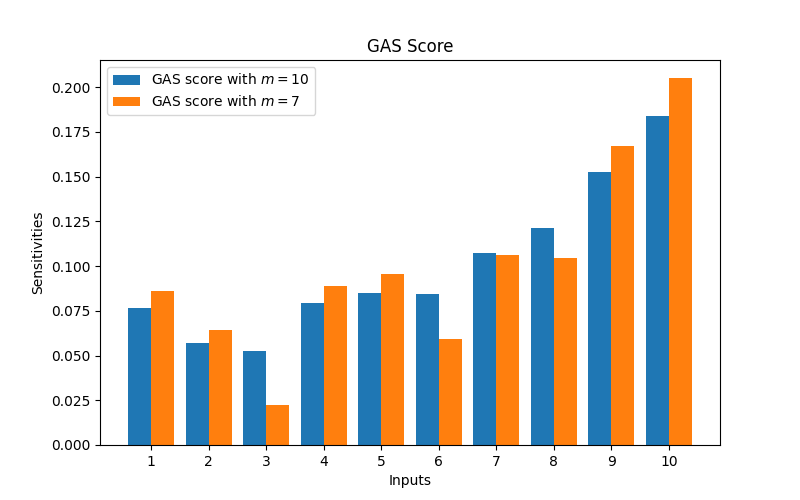}}\\
\caption{Sensitivity indices when $k=0.1$, in Example 1}
\label{fig:itr_noise_3_5}
\end{figure}

\subsubsection{High noise ($k=1$)}

Fig. \ref{fig:11} plots the eigenvalues and the first eigenvector, and Fig. \ref{fig:itr_noise_4} plots the sensitivity indices for the case of high noise. Based on the eigenvalues, we pick $m=8$ for activity scores, and $m=9$ for global activity scores. DGSM and activity scores fail in identifying the important variables like in the case of moderate noise. We now observe a deterioration in the performance of global activity scores. Although a few pairwise rankings match those of the Sobol' indices, the method assigns substantially more importance to the least important variables according to Sobol'.


\begin{figure}[h] 
\centering
\subfloat{\includegraphics[width=0.5\textwidth]{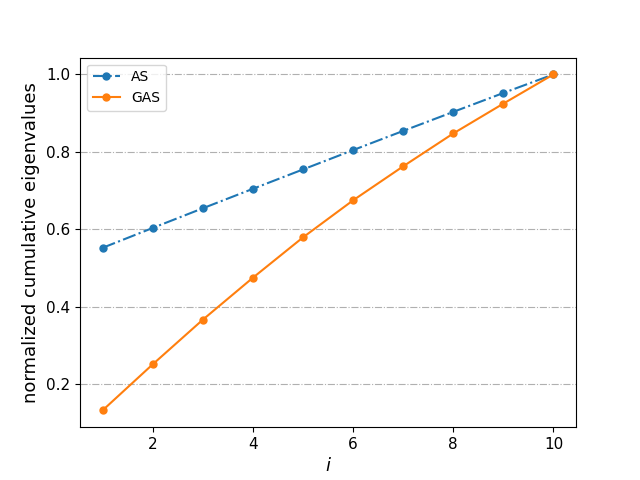}}
\subfloat{\includegraphics[width=0.5\textwidth]{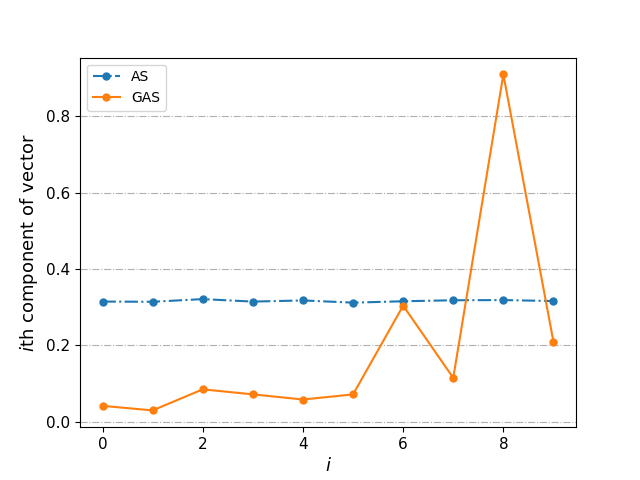}}\\
\caption{Normalized cumulative sum of eigenvalues (left) and the first eigenvector (right) of $\pmb C_\text{as}$ and $\pmb C_\text{gas}$, when $k=1$, in Example 1}
\label{fig:11}
\end{figure}

\begin{figure}[h] 
\centering
\subfloat[Sobol' indices]{\includegraphics[width=0.5\textwidth]{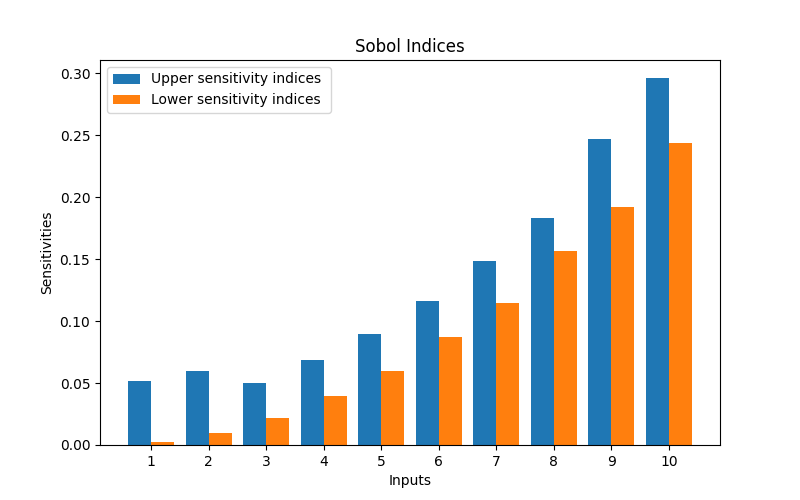}}
\subfloat[Normalized DGSMs]{\includegraphics[width=0.5\textwidth]{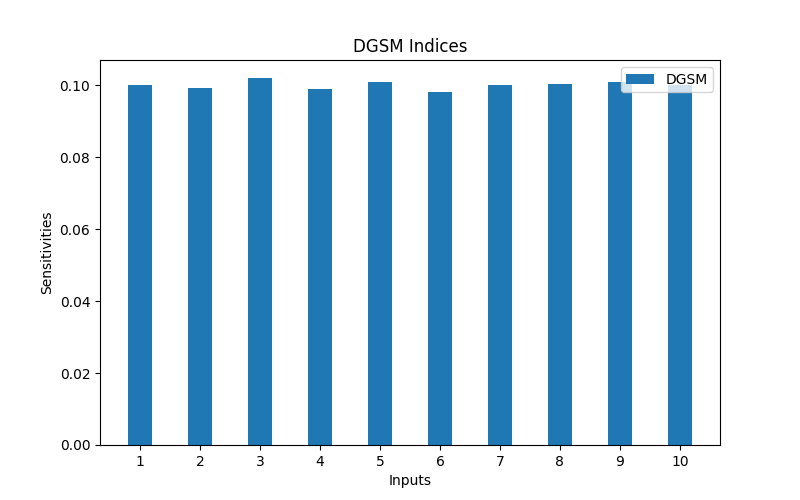}}\\
\subfloat[Normalized activity scores]{\includegraphics[width=0.5\textwidth]{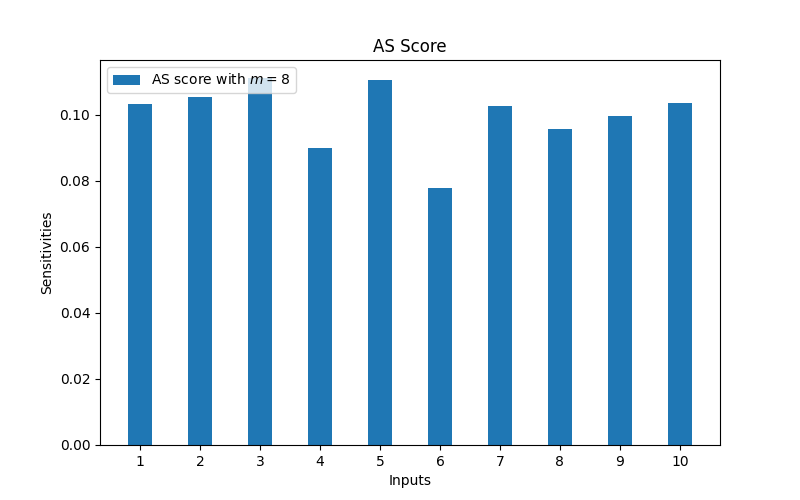}}
\subfloat[Normalized global activity scores]{\includegraphics[width=0.5\textwidth]{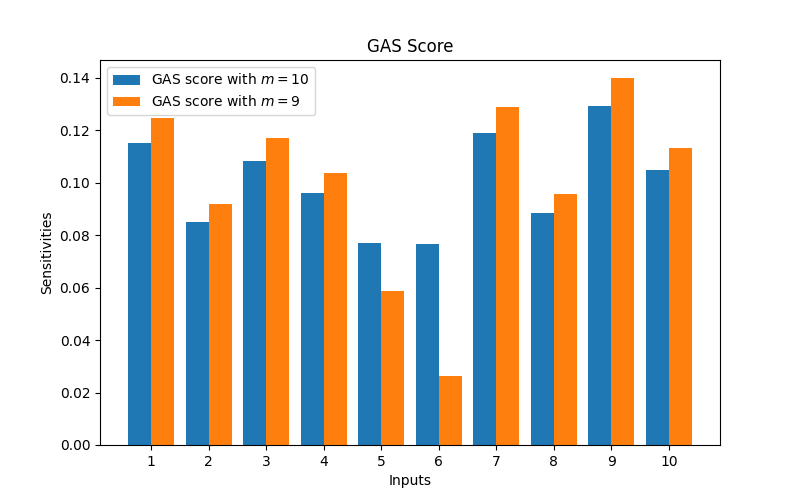}}\\
\caption{Sensitivity indices when $k=1$, in Example 1}
\label{fig:itr_noise_4}
\end{figure}

\begin{remark}
We also applied the Morris $\mu^*$ elementary-effects measure to Example~1. It is at least as robust as the global activity scores, and the resulting factor ranking is consistent with those obtained using Sobol' indices, although the relative magnitudes of the individual sensitivity measures differ.
\end{remark}

\subsection{Example 2: IEEE 14-bus power system}
In this example we consider the global sensitivity analysis of the IEEE 14-bus power system \cite{Ieee14busbar}. A description of the problem is provided in Yue et al. \cite{yue2022comparison}, who used Sobol' sensitivity indices, DGSMs, and activity scores to analyze the model. Here we will apply the global activity score method to the power system problem and compare it with the other sensitivity measures.

The output of interest in the sensitivity analysis is the steady state solution for the voltage at busbar 9. The inputs are the active power at each node $i$. We set up the differential equations as in Yue et al. \cite{yue2022comparison}, and then find the output by computing the load flow solution with Newton-Raphson method (Tinney and Hart \cite{tinney1967power}). We assume the $i$th input ($i=1,\ldots,14$) follows the distribution $N \left({P}_{i},(0.5 {P}_{i})^{2} \right)$, which corresponds to the largest variance 
uncertainty scenario considered in Ni et al. \cite{ni2017variance}. The value of ${P}_i$'s are presented in Table \ref{table:base}. 

\begin{table}[htbp!]
\caption{Means of normal distributions for power system}
\centering 
\begin{tabular}{cccccccccc} 
\hline
\textbf{Busbar ${i}$} & \textbf{1}& \textbf{2}& \textbf{3}& \textbf{4}& \textbf{5}& \textbf{6}& \textbf{7}\\
\hline
\textbf{${P}_{i}$}	&0	&21.7	&94.2	&47.8	&7.6	&11.2	&0	 \\ 
\hline
\textbf{Busbar ${i}$} & \textbf{8}& \textbf{9}& \textbf{10}& \textbf{11}& \textbf{12}& \textbf{13}& \textbf{14}\\
\hline
\textbf{${P}_{i}$}	&0	&29.5	&9	&3.5	&6.1	&13.5	&14.9 \\ 
\hline
\end{tabular}
\label{table:base}
\end{table}



In the numerical results, we use a Monte Carlo sample size of $N=10{,}000$ for the Sobol' indices, DGSM, and activity scores. For the global activity scores, we use a Monte Carlo sample size of $N=1{,}000$. 

Figure \ref{fig:1} plots the sensitivity indices. DGSM and activity score methods suggest very different important variables than Sobol' indices. The normalized DGSM and activity scores for nodes 4, 6, 13, and 14 are about zero (the largest value is $3\times 10^{-5}$). However, these nodes have significant importance, according to Sobol' indices. For example, node 4 has the second largest upper Sobol' index with a value of 0.22. The upper Sobol' indices for nodes 6 and 14 are 0.09 and 0.082. The global activity scores, on the other hand, are very similar to upper Sobol' indices. According to upper Sobol' indices, the six most important inputs, in decreasing order, are 3, 4, 9, 13, 6, 14, and according to global activity scores ($m=10$), the most important inputs are 3, 9, 4, 13, 14, 6.

\begin{figure}[htpb!] 
\centering
\subfloat[Sobol' indices]{\includegraphics[width=0.5\textwidth]{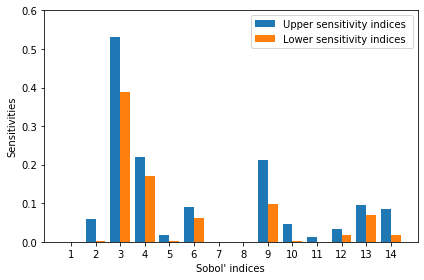}}
\subfloat[Normalized DGSMs]{\includegraphics[width=0.5\textwidth]{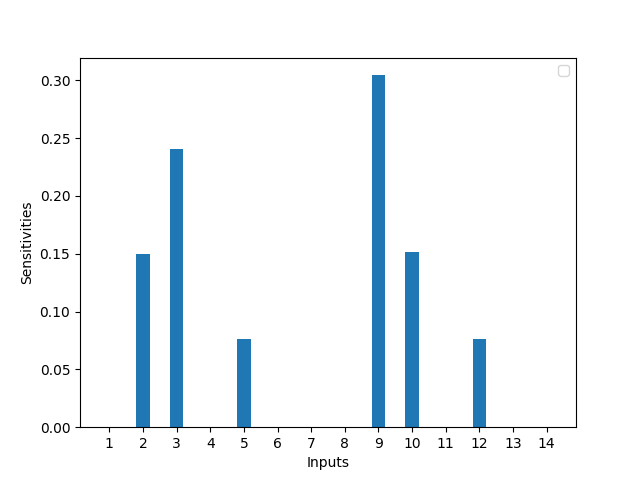}}\\
\subfloat[Normalized activity scores]{\includegraphics[width=0.5\textwidth]{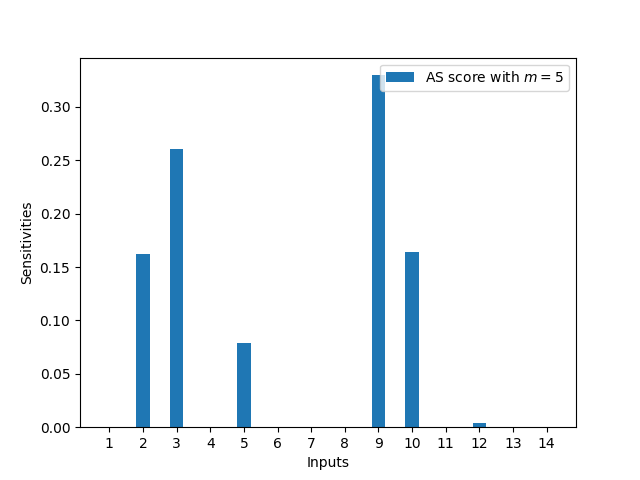}}
\subfloat[Normalized global activity scores]{\includegraphics[width=0.5\textwidth]{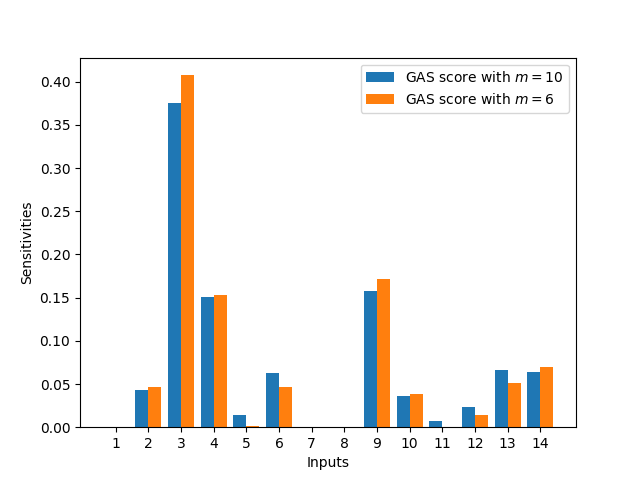}}\\
\caption{Sensitivity indices for power system}
\label{fig:1}
\end{figure}

\subsection{Example 3: A counterexample from Sobol' and Kucherenko}
Sobol' and Kucherenko \cite{kucherenko2010new} present a function where the ranking of important variables by DGSM may result in false conclusions and give a different ranking than the upper Sobol' indices. The function is given by
\[
f(\pmb x)= \sum_{i=1}^4 c_i \left(x_i-1/2\right)+c_{12}\left(x_1-1/2\right) \left(x_2-1/2\right)^5,
\]
where $\pmb x = (x_1,x_2,x_3,x_4), c_i=1,1\leq i \leq 4$ and $c_{12}=50$.
The upper Sobol' index for $x_1$ and $x_2$ is 0.289, and the upper Sobol' index for $x_3$ and $x_4$ is 0.237.

We want to examine how the global activity scores compare with upper Sobol' and DGSM for this example. In the numerical results, we use a Monte Carlo sample size of $N=20{,}000$ for the Sobol' indices, DGSM, and activity scores. For the global activity scores, we use a Monte Carlo sample size of $N=2{,}000$. 
Fig. \ref{fig:countereg} plots the sensitivity indices for the function. Normalized DGSMs and activity scores ($m=2$) give the same results: they find inputs 1, 3, and 4 as comparable in importance, and input 2 as much more important. Sobol' and Kucherenko explain the reason for this discrepancy by the strong nonlinearity of the term $c_{12}\left(x_1-1/2\right) \left(x_2-1/2\right)^5$. 
The normalized global activity scores rank the first two variables as more important than the last two, similar to the Sobol' sensitivity indices; however, they identify the second input as more important than the first, though to a lesser extent compared to DGSM and activity scores.

\begin{figure}[h] 
\centering
\subfloat[Sobol' indices]{\includegraphics[width=0.5\textwidth]{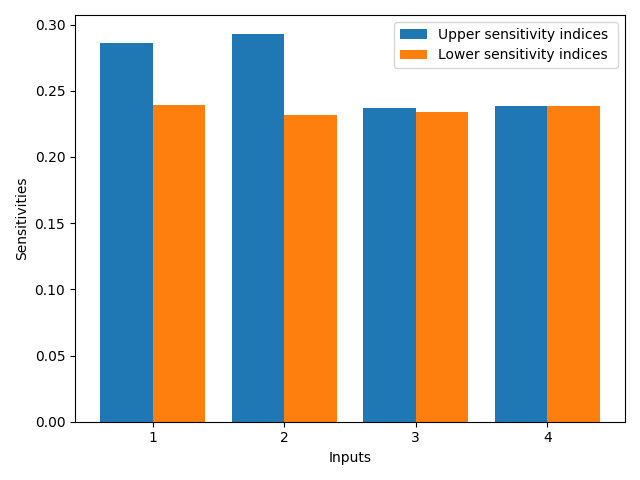}}
\subfloat[Normalized DGSMs]{\includegraphics[width=0.5\textwidth]{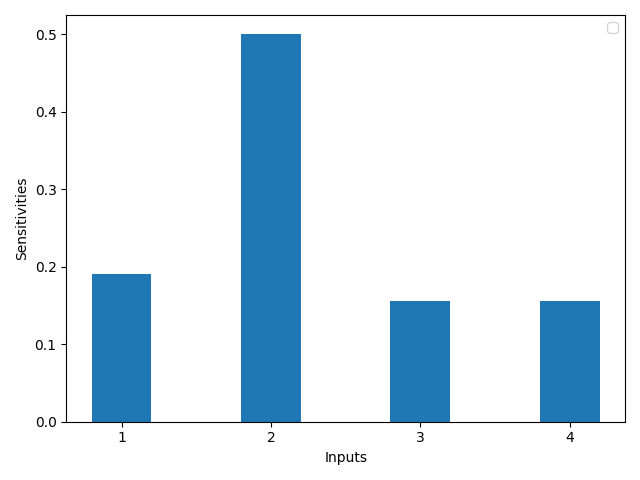}}\\
\subfloat[Normalized activity scores]{\includegraphics[width=0.5\textwidth]{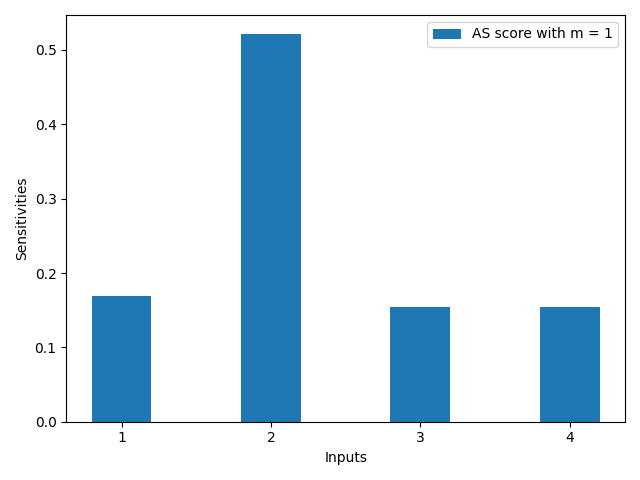}}
\subfloat[Normalized global activity scores]{\includegraphics[width=0.5\textwidth]{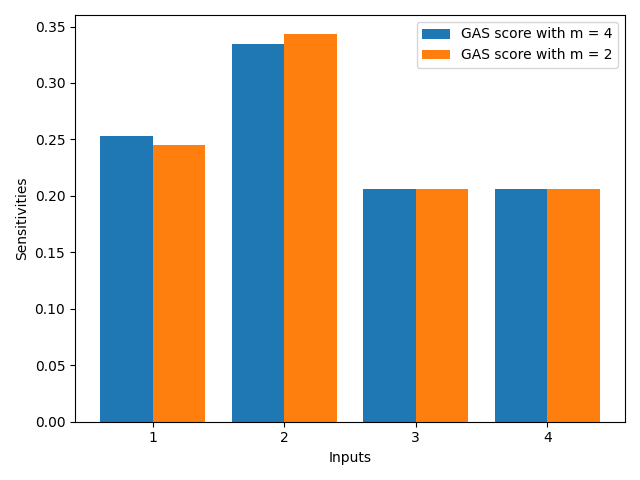}}\\
\caption{Sensitivity indices for the example from Sobol' and Kucherenko}
\label{fig:countereg}
\end{figure}

\section{Conclusions}
\label{sec:conc}
Through the numerical results, we observed that the global activity scores align with the upper Sobol' indices much more closely than DGSM or activity scores in the presence of noise (Example 1), large variance (Example 2), and strong nonlinearity (Example 3). In the absence of such factors, we found empirically that the global activity scores behave similarly to DGSM and activity scores (we report some of these results - specifically the no-noise case in Example 1 - in the paper). These findings highlight the robustness of global activity scores in challenging settings where other sensitivity measures may be misleading. However, we emphasize that global activity scores are not immune to noise, as demonstrated by the high noise setting of Example 1. Beyond the examples presented here, we expect the method to be broadly applicable in problems involving noisy simulations or strong nonlinearities, making it a valuable addition to the global sensitivity analysis toolbox.

Much of the theory of the global activity scores presented in the paper requires independence of the inputs. For problems with dependent inputs, the method can be generalized in two ways. One is to use the approach of Lamboni and Kucherenko \cite{LAMBONI2021107519}, which uses a dependency model that maps the dependent inputs to a vector of independent variables, and compute the global activity scores for the independent variables. The other is to use the approach of Duan and \"{O}kten \cite{duan2023derivative}, which uses global activity scores to define a Shapley value that can then be used for models with dependent inputs.

\appendix

\section{Well-posedness of $\pmb C$}

We present sufficient conditions for the existence of the integrals in $\pmb C$ (Eqn. (\ref{def_C})), for the case of bivariate $f$ -- the arguments generalize to higher dimensions in a straightforward way. 

It suffices to consider the integrals
\begin{equation}\nonumber
\begin{split}
C_{11}&=\int_{\mathbb{R}^3}\frac{(f(v_1,z_2)-f(z_1,z_2))^2}{(v_1-z_1)^2}d\tilde{F}(v_1, z_1,z_2),\\
C_{12}&=\int_{\mathbb{R}^4}\frac{(f(v_1,z_2)-f(z_1,z_2))(f(z_1,v_2)-f(z_1,z_2))}{(v_1-z_1)(v_2-z_2)}d\tilde{F}(v_1,v_2,z_1,z_2),
\end{split}
\end{equation}
where we assume $\tilde{F}$ is an $s$-dimensional Stieltjes measure function (Kingman and Taylor \cite{kingman}) that is absolutely continuous with respect to Lebesgue measure. This assumption is satisfied, for example, when the components $z_i$ and $v_i$ are independent and each has a density, in which case $\tilde{F}$ can be written as the product of marginal distribution functions:
\begin{align}
d\tilde{F}(v_1, z_1,z_2)&=dF_1(v_1)dF_1(z_1)dF_2(z_2),\\
d\tilde{F}(v_1,v_2,z_1,z_2)&=dF_1(v_1)dF_2(v_2)dF_1(z_1)dF_2(z_2).
\end{align}
A Stieltjes measure function defines a measure $\mu_{\tilde{F}}$ on half-open rectangles in $\mathbb{R}^s$. This measure is then extended, using standard measure theory techniques, to Borel sets in $\mathbb{R}^s$, whose completion yields the Lebesgue--Stieltjes measurable sets in $\mathbb{R}^s$.

\begin{theorem}
Let $f(x,y):\mathbb{R}^2 \rightarrow \mathbb{R}$ be a bounded function with continuous and bounded first-order partial derivatives. Define $g(v_1,z_1,z_2)$ by 
\[
g(v_1,z_1,z_2) = \begin{cases}
\left(\frac{f(v_1,z_2)-f(z_1,z_2)}{v_1-z_1}\right)^2 \text{if } v_1 \neq z_1,\\
\left(\frac{\partial f}{\partial x}(z_1,z_2)\right)^2 \text{if } v_1 = z_1.
\end{cases}
\]
Then the integral $C_{11}$, defined as the limit of Riemann-Stieltjes integrals,
\[
C_{11}=\lim_{n\rightarrow \infty} \int_{\mathbb{R}^3\backslash \mathcal{B}_n} g(v_1,z_1,z_2) d\tilde{F}(v_1, z_1,z_2)
\]
exists, where 
\[
\mathcal{B}_n=\{(v_1,z_1,z_2) \in \mathbb{R}^3, \text{s.t. } |v_1-z_1|<1/n \},n>0.
\]
\end{theorem}

\begin{proof}
For each $n>0$, the function $g$ is bounded and continuous on 
$\mathbb{R}^3 \setminus \mathcal{B}_n$, hence the Riemann--Stieltjes integral 
\[
I_n=\int_{\mathbb{R}^3 \setminus \mathcal{B}_n} g(v_1,z_1,z_2)\, d\tilde{F}(v_1, z_1,z_2)
\]
is well-defined.  The sequence $(I_n)$ is monotone increasing, since $\mathcal{B}_{n+1}\subseteq \mathcal{B}_n$, and it is bounded above by 
$
\int_{\mathbb{R}^3} g \, d\tilde{F}.
$
Thus the limit $\lim_{n\to\infty} I_n$ exists.  
Moreover, since $g$ is bounded on $\mathbb{R}^3$, say $0 \leq g \leq C$, we have
\[
\int_{\mathcal{B}_n} g \, d\tilde{F} \leq C\, \mu_{\tilde{F}}(\mathcal{B}_n) \;\longrightarrow\; 0,
\]
as $n\to\infty$. Therefore,
\[
C_{11} = \int_{\mathbb{R}^3} g \, d\tilde{F}.
\]
\end{proof}

\begin{theorem}
Let $f(x,y):\mathbb{R}^2 \rightarrow \mathbb{R}$ be a bounded function with continuous and bounded first-order partial derivatives. Define 
\[
g(v_1,v_2,z_1,z_2) = 
\frac{(f(v_1,z_2)-f(z_1,z_2))(f(z_1,v_2)-f(z_1,z_2))}{(v_1-z_1)(v_2-z_2)},
\]
if $v_1 \neq z_1$ and $v_2 \neq z_2$. If $v_1 = z_1$, then the corresponding term 
$\frac{f(v_1,z_2)-f(z_1,z_2)}{v_1-z_1}$ is replaced by $\frac{\partial f}{\partial x}(z_1,z_2)$. 
The case $v_2 = z_2$ is handled similarly.  
Then the integral $C_{12}$, defined as the limit of Riemann--Stieltjes integrals,
\[
C_{12}=\lim_{n\rightarrow \infty} \int_{ \mathbb{R}^4\backslash \mathcal{B}_n} g(v_1,v_2,z_1,z_2)\, d\tilde{F}(v_1,v_2,z_1,z_2),
\]
exists, where, for $n>0$,
\[
\mathcal{B}_n=\{(v_1,v_2,z_1,z_2) \in \mathbb{R}^4 : |v_1-z_1|<1/n \}\,\cup\,\{(v_1,v_2,z_1,z_2) \in \mathbb{R}^4 : |v_2-z_2|<1/n \}.
\]
\end{theorem}

\begin{proof}
Since $g$ is bounded and continuous on $\mathbb{R}^4\backslash \mathcal{B}_n$ for all $n>0$,
the Riemann-Stieltjes integrals $\int_{ \mathbb{R}^4\backslash \mathcal{B}_n} g(v_1,v_2,z_1,z_2)$ exist.
Let
\[
I_n=\int_{ \mathbb{R}^4\backslash \mathcal{B}_n} g(v_1,v_2,z_1,z_2)\, d\tilde{F}(v_1,v_2,z_1,z_2),
\]
and
\[
J_n=\int_{ \mathbb{R}^4\backslash \mathcal{B}_n} |g(v_1,v_2,z_1,z_2)|\, d\tilde{F}(v_1,v_2,z_1,z_2). 
\]
Then $(J_n)$ is an increasing sequence bounded by $\int_{ \mathbb{R}^4} |g|\, d\tilde{F}$, so $\lim_{n\to\infty} J_n$ exists.  
Moreover,
\[
0\leq |g(v_1,v_2,z_1,z_2)|-g(v_1,v_2,z_1,z_2)\leq 2 |g(v_1,v_2,z_1,z_2)|,
\]
which implies
\[
\lim_{n\rightarrow \infty} \int_{ \mathbb{R}^4\backslash \mathcal{B}_n} \big(|g(v_1,v_2,z_1,z_2)|-g(v_1,v_2,z_1,z_2)\big)\,d\tilde{F}(v_1,v_2,z_1,z_2)
\]
exists, since the sequence is increasing and bounded by $2\int_{ \mathbb{R}^4} |g| d\tilde{F}$.  
Since the limits of the integrals of $|g|$ and $|g|-g$ both exist, the limit
\[
C_{12}=\lim_{n\rightarrow \infty} I_n
\]
exists as well.  
Finally, since $g$ is bounded on $\mathbb{R}^4$,  
$\int_{\mathcal{B}_n} g d\tilde{F} \rightarrow 0$ as $n\rightarrow\infty$,
and hence
\[
C_{12}=\int_{ \mathbb{R}^4} g(v_1,v_2,z_1,z_2)\, d\tilde{F}.
\]
\end{proof}

\section{Computational cost of sensitivity indices}
\label{appendixB}

Below we present tables summarizing the number of function evaluations and computational time\footnote {In examples 1 and 3, numerical simulations and sensitivity analysis experiments were conducted on a 2022 MacBook Pro workstation. The system is equipped with an Apple M2 System on a Chip (SoC) featuring an 8-core CPU based on the ARM64 architecture, and 8 GB of unified memory. The operating system used was macOS Sonoma (Version 14.4.1).
In example 2, numerical experiments were conducted using MATLAB Online (R2025b) hosted on a high-performance AWS cloud instance. The underlying hardware featured an Intel Xeon Platinum 8488C CPU (16 vCPUs). The system possessed a total physical memory of 128 GB, which was dynamically allocated as a shared resource.} for the examples considered in Section \ref{sec:numerical}. Table \ref{tab:cost_1} presents the results for the no-noise setting of Example 1. The other noisy cases yield similar results. Tables \ref{tab:cost_2} and \ref{tab:cost_3} present the results for Examples 2 and 3.

\begin{table}[htbp]
\centering
\begin{tabular}{lcc} 
\hline
\textbf{Sensitivity index} & \textbf{Function evaluations} & \textbf{Time (s)}\\
\hline
Upper Sobol'           & $220{,}000$   & 0.074 \\
Lower Sobol'           & $440{,}000$   & 0.141 \\
Activity scores        & $220{,}000$   & 0.055 \\
DGSM                   & $220{,}000$   & 0.055 \\
Global activity scores & $202{,}000$   & 0.075 \\
\hline
\end{tabular}
\caption{Comparison of computational cost: Example 1, $k = 0$}
\label{tab:cost_1}
\end{table}

\begin{table}[htbp]
\centering
\begin{tabular}{lcc} 
\hline
\textbf{Sensitivity index} & \textbf{Function evaluations} & \textbf{Time (s)}\\
\hline
Upper Sobol'           & $150{,}000$   & 245 \\
Lower Sobol'           & $300{,}000$   & 528 \\
Activity scores        & $150{,}000$   & 254 \\
DGSM                   & $150{,}000$   & 254 \\
Global activity scores & $141{,}000$   & 261 \\
\hline
\end{tabular}
\caption{Comparison of computational cost: Example 2}
\label{tab:cost_2}
\end{table}

\begin{table}[htbp]
\centering
\begin{tabular}{lcc} 
\hline
\textbf{Sensitivity index} & \textbf{Function evaluations} & \textbf{Time (s)}\\
\hline
Upper Sobol'           & $100{,}000$   & 0.014 \\
Lower Sobol'           & $200{,}000$  & 0.035 \\
Activity scores        & $100{,}000$   & 0.029 \\
DGSM                   & $100{,}000$   & 0.029 \\
Global activity scores & $102{,}000$  & 0.010 \\
\hline
\end{tabular}
\caption{Comparison of computational cost: Example 3}
\label{tab:cost_3}
\end{table}

\bibliographystyle{siamplain}
\bibliography{references}
\end{document}